\date{}
\newlength{\defbaselineskip}
\newcommand{\setlinespacing}[1]%
           {\setlength{\baselineskip}{#1 \defbaselineskip}}
\newcommand{\N}{{\mathbb{N}}}
\newcommand{\actaqed}{\hfill $\actabox$}
{\medskip\noindent \textit{Proof of #1. }}%
{\actaqed \medskip}
\def\C{{\mathcal C}}
\def\cF{{\mathcal F}}
\def \E{\mathcal E}
\def \cE{\mathcal E}
\def \bbE{\mathbb E}
\def \cM{\mathcal M}
\def\R{{\mathbb R}}
\def\Z{\mathbb Z}
\def \T{\mathbb T}
\def\bE{\mathbb E}
\def \<{\langle}
\def\>{\rangle}
\def \La{\Lambda}
\def \e{\varepsilon}
\def \de{\delta}
\def \ff{\varphi}
\def\la{\lambda}
\def\bx{\mathbf x}
\def\by{\mathbf y}
\def\bz{\mathbf z}
\def\bk{\mathbf k}
\def\bw{\mathbf w}
\def\bn{\mathbf n}
\def\bW{\mathbf W}
\def\bE{\mathbf E}
\def\bF{\mathbf F}
\newtheorem{Theorem}{Theorem}[section]
\newtheorem{Lemma}{Lemma}[section]
\newtheorem{Proposition}{Proposition}[section]
\newtheorem{Remark}{Remark}[section]
\newtheorem{Corollary}{Corollary}[section]
\numberwithin{equation}{section}
\newcommand{\be}{\begin{equation}}
\newcommand{\ee}{\end{equation}}
\begin{document}

\title{ Sampling discretization error  for function classes}
\author{  V.N. Temlyakov\thanks{University of South Carolina, Steklov Institute of Mathematics, and Lomonosov Moscow State University.  }}
\maketitle
\begin{abstract}
{The new ingredient of this paper is that we consider infinitely dimensional classes of functions and instead of the relative error setting, which was used in previous papers on norm discretization, we consider the absolute error setting. We demonstrate how known results from two areas of research -- supervised learning theory and numerical integration -- can be used in sampling discretization of the square norm on different function classes.  }
\end{abstract}

\section{Introduction}
This paper is devoted to a study of discretization of the $L_2$ norm of continuous functions.
Recently, in a number of papers (see \cite{VT158}, \cite{VT159}, \cite{DPTT}, \cite{KT168}) a systematic study of the problem of discretization of the $L_q$ norms of elements of finite dimensional subspaces has begun. The first results in this direction were obtained by Marcinkiewicz and 
by Marcinkiewicz-Zygmund (see \cite{Z}) for discretization of the $L_q$ norms of the univariate trigonometric polynomials in 1930s. This is why we call discretization results of this kind the Marcinkiewicz-type theorems. There are different ways to discretize: use coefficients from an expansion with respect to a basis, more generally, use 
linear functionals. We discuss here the way which uses function values at a fixed finite set of points. We call this way of discretization {\it sampling discretization}. 
In the case of finite dimensional subspaces the following problems were discussed in \cite{VT158} and \cite{VT159}. 

{\bf Marcinkiewicz problem.} Let $\Omega$ be a compact subset of $\R^d$ with the probability measure $\mu$. We say that a linear subspace $X_N$ of the $L_q(\Omega):=L_q(\Omega,\mu)$, $1\le q < \infty$, admits the Marcinkiewicz-type discretization theorem with parameters $m$ and $q$ if there exist a set $\{\xi^\nu \in \Omega, \nu=1,\dots,m\}$ and two positive constants $C_j(d,q)$, $j=1,2$, such that for any $f\in X_N$ we have
\be\label{I.1}
C_1(d,q)\|f\|_q^q \le \frac{1}{m} \sum_{\nu=1}^m |f(\xi^\nu)|^q \le C_2(d,q)\|f\|_q^q.
\ee
In the case $q=\infty$ we define $L_\infty$ as the space of continuous on $\Omega$ functions and ask for 
\be\label{I.2}
C_1(d)\|f\|_\infty \le \max_{1\le\nu\le m} |f(\xi^\nu)| \le  \|f\|_\infty.
\ee
We also use a brief way to express the above property: the $\cM(m,q)$ theorem holds for  a subspace $X_N$ or $X_N \in \cM(m,q)$. 

{\bf Marcinkiewicz problem with weights.}  We say that a linear subspace $X_N$ of the $L_q(\Omega)$, $1\le q < \infty$, admits the weighted Marcinkiewicz-type discretization theorem with parameters $m$ and $q$ if there exist a set of knots $\{\xi^\nu \in \Omega\}$, a set of weights $\{\la_\nu\}$, $\nu=1,\dots,m$, and two positive constants $C_j(d,q)$, $j=1,2$, such that for any $f\in X_N$ we have
\be\label{I.5}
C_1(d,q)\|f\|_q^q \le  \sum_{\nu=1}^m \la_\nu |f(\xi^\nu)|^q \le C_2(d,q)\|f\|_q^q.
\ee
Then we also say that the $\cM^w(m,q)$ theorem holds for  a subspace $X_N$ or $X_N \in \cM^w(m,q)$. 
Obviously, $X_N\in \cM(m,q)$ implies that $X_N\in \cM^w(m,q)$. 

{\bf Marcinkiewicz problem with $\e$.} We write $X_N\in \cM(m,q,\e)$ if (\ref{I.1}) holds with $C_1(d,q)=1-\e$ and $C_2(d,q)=1+\e$.  Respectively, 
we write $X_N\in \cM^w(m,q,\e)$ if (\ref{I.5}) holds with $C_1(d,q)=1-\e$ and $C_2(d,q)=1+\e$.

The Marcinkiewicz problem with $\e$ is devoted to looking for a relative error of discretization. 
It is clear that in the setting of a relative error the necessary condition on the number $m$ of sample points is $m\ge N$, where $N$ is the dimension of the subspace. Thus, we cannot work in the relative error setting for an infinitely dimensional class of functions. The new ingredient of this paper is that we consider infinitely dimensional classes of functions and instead of the relative error setting we consider the absolute error setting. We formulate it explicitly. 

{\bf Sampling discretization with absolute error.} Let $W\subset L_q(\Omega,\mu)$, $1\le q<\infty$, be a class of continuous on $\Omega$ functions. We are interested in estimating 
the following optimal errors of discretization of the $L_q$ norm of functions from $W$
$$
er_m(W,L_q):= \inf_{\xi^1,\dots,\xi^m} \sup_{f\in W} \left|\|f\|_q^q - \frac{1}{m}\sum_{j=1}^m |f(\xi^j)|^q\right|,
$$
$$
er_m^o(W,L_q):= \inf_{\xi^1,\dots,\xi^m;\la_1,\dots,\la_m} \sup_{f\in W} \left|\|f\|_q^q - \sum_{j=1}^m \la_j |f(\xi^j)|^q\right|.
$$

It will be convenient for us to use the following notation. For  given sets $\xi:=\{\xi^j\}_{j=1}^m$ of sampling points and $\La:=\{\la_j\}_{j=1}^m$ of weights we write
$$
er(f,\xi,L_q):=  \left|\|f\|_q^q - \frac{1}{m}\sum_{j=1}^m |f(\xi^j)|^q\right|,
$$
$$
er(f,\xi,\La,L_q):=\left|\|f\|_q^q - \sum_{j=1}^m \la_j |f(\xi^j)|^q\right|.
$$
In this paper we only discuss in detail the case $q=2$. For this reason, in case $q=2$ we drop 
$L_q$ from notation: $er(f,\xi):=er(f,\xi,L_2)$, $er(f,\xi,\La):=er(f,\xi,\La,L_2)$. 

In this paper we demonstrate how known results from two areas of research -- supervised learning theory and numerical integration -- can be used in sampling discretization in $L_2$. 
We now formulate two typical results obtained in the paper. In Section \ref{pa} (see Theorem \ref{BT5}) we obtain the following result (see Section \ref{pa} for the definition of entropy numbers).

\begin{Theorem}\label{IT1} Assume that a class of real functions $W$ is such that for all $f\in W$ we have $\|f\|_\infty \le M$ with some constant $M$. Also assume that the entropy numbers of $W$ in the uniform norm $L_\infty$ satisfy the condition
$$
  \e_n(W) \le Cn^{-r},\qquad r\in (0,1/2).
$$
Then
$$
er_m(W):= er_m(W,L_2)  \le Km^{-r}.
$$
\end{Theorem}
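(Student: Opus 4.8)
The plan is to reduce the sampling discretization problem to a question about estimating the average (integral) of functions from a related class, and then invoke the standard empirical-process / learning-theory machinery that controls uniform deviation of empirical averages via entropy numbers. Concretely, for $f\in W$ the quantity we must control is
$$
er(f,\xi) = \Bigl|\int_\Omega f^2\,d\mu - \frac1m\sum_{j=1}^m f^2(\xi^j)\Bigr|,
$$
which is exactly the deviation of the empirical average of the function $g := f^2$ from its mean. So I would introduce the class $W^2 := \{f^2 : f\in W\}$ and observe that $\sup_{f\in W} er(f,\xi)$ is the uniform (over $g\in W^2$) deviation of the empirical mean from the true mean at the sample $\xi$. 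Thus $er_m(W,L_2) \le$ the minimal-over-$\xi$ uniform deviation for $W^2$, and it suffices to bound the latter.

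Next I would transfer the entropy hypothesis from $W$ to $W^2$. Since all $f\in W$ satisfy $\|f\|_\infty\le M$, for $f,h\in W$ we have $\|f^2-h^2\|_\infty = \|(f-h)(f+h)\|_\infty \le 2M\|f-h\|_\infty$, so an $\e$-net for $W$ in $L_\infty$ yields a $2M\e$-net for $W^2$ in $L_\infty$; hence $\e_n(W^2)\le 2M\,\e_n(W) \le 2CM\,n^{-r}$, and also $\|g\|_\infty\le M^2$ for all $g\in W^2$. Now the heart of the matter is a probabilistic (random point) argument: draw $\xi^1,\dots,\xi^m$ i.i.d.\ from $\mu$ and estimate
$$
\bbE\,\sup_{g\in W^2}\Bigl|\int_\Omega g\,d\mu - \frac1m\sum_{j=1}^m g(\xi^j)\Bigr|.
$$
This is controlled by a chaining / Dudley-type bound in terms of the entropy numbers of $W^2$ in $L_\infty$ (or, after symmetrization, in terms of $L_2(\mu)$ entropy, which is dominated by $L_\infty$ entropy). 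With $\e_n(W^2)\lesssim n^{-r}$ and $r<1/2$, the chaining integral $\sum_n \e_n(W^2)\,m^{-1/2}$-type sum (equivalently $\int_0^\infty \sqrt{\log\mathcal N(\e)}\,d\e$ truncated appropriately, or the discrete analogue $m^{-1/2}\sum_{n\le m}\e_n n^{-1/2}$ together with the tail) converges at rate $m^{-r}$: the condition $r<1/2$ is exactly what makes the bound of order $m^{-r}$ rather than the parametric $m^{-1/2}$. I expect this to follow from a result already available in supervised learning theory (and cited/proved in Section~\ref{pa}), so I would quote Theorem~\ref{BT5} or its ingredients rather than redo the chaining.

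Finally, having shown that the expectation over random $\xi$ of $\sup_{f\in W} er(f,\xi)$ is at most $Km^{-r}$, there must exist at least one realization $\xi = \{\xi^j\}_{j=1}^m$ with $\sup_{f\in W} er(f,\xi) \le Km^{-r}$; taking the infimum over point configurations gives $er_m(W,L_2)\le Km^{-r}$, as claimed. The main obstacle is the chaining step: one must handle the interplay between the $L_\infty$ metric in which entropy is assumed, the $L_2(\mu)$ metric natural for the variance of $g(\xi^j)$, and the need to split into a ``resolved'' scale and a tail so that the boundedness $\|g\|_\infty\le M^2$ controls the coarse scales while the entropy decay controls the fine scales; getting the rate to come out as $m^{-r}$ uniformly (and seeing precisely where $r<1/2$ is used) is the delicate part, but it is packaged in the learning-theory estimate of Section~\ref{pa}.
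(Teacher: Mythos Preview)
Your proposal is correct and matches the paper's approach: the paper specializes the learning-theory defect function to the case $y\equiv 0$ (so that $L_\bz(f)$ is precisely the empirical deviation of $f^2$ that you write down) and then invokes Corollary~\ref{BC2}, which packages the chaining argument and the $r<1/2$ rate; your explicit passage to $W^2$ together with the entropy transfer $\e_n(W^2)\le 2M\,\e_n(W)$ is exactly what that machinery does internally. One caution: Theorem~\ref{BT5} \emph{is} Theorem~\ref{IT1} restated, so the ingredient you should quote is Corollary~\ref{BC2} (or Theorem~\ref{BT3}), not Theorem~\ref{BT5} itself.
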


Theorem \ref{IT1} is a rather general theorem, which connects the behavior of absolute errors of discretization with the rate of decay of the entropy numbers. This theorem is derived in Section \ref{pa} from known results in supervised learning theory. It is well understood in learning theory (see, for example, \cite{Tbook}, Ch.4) that the entropy numbers of the class 
of priors (regression functions) is the right characteristic in studying the regression problem. 
We impose a restriction $r<1/2$ in Theorem \ref{IT1} because the probabilistic technique from the supervised learning theory has a natural limitation to $r\le 1/2$. It would be interesting to understand if Theorem \ref{IT1} holds for $r\ge 1/2$. Also, it would be interesting to obtain an analog of Theorem \ref{IT1} for discretization in $L_q$, $1\le q<\infty$, norm.

For classes of smooth functions we obtained in Section \ref{sc} error bounds, which do not have a restriction on smoothness $r$. We proved there (see Theorems \ref{AT1}, \ref{CT2}, and inequality (\ref{C6})) the following bounds for the class $\bW^r_2$ of functions on $d$ variables with bounded in $L_2$ mixed derivative (see Section \ref{sc} for a rigorous definition of this class). 

\begin{Theorem}\label{IT2} Let $r>1/2$ and $\mu$ be the Lebesgue measure on $[0,2\pi]^d$. Then
$$
er_m^o(\bW^r_2,L_2) \asymp m^{-r}(\log m)^{(d-1)/2}.
$$
\end{Theorem}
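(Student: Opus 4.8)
The plan is to reduce the problem to numerical integration of the \emph{squares} of functions from $\bW^r_2$, and then to use sharp cubature bounds for classes of mixed smoothness.

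\emph{Reduction and a Fourier identity.} Since $q=2$ and the functions are real, $\|f\|_2^2=\int_{\Td}f^2\,d\mu$ and $\sum_{j=1}^m\la_j|f(\xi^j)|^2=\sum_{j=1}^m\la_j(f^2)(\xi^j)$, so $er(f,\xi,\La)$ is precisely the error of the cubature formula $(\xi,\La)$ applied to $f^2$. Writing $f=\sum_k\hat f(k)e^{i\<k,x\>}$ and $w(k):=\prod_{i=1}^d\max(1,|k_i|)^r$ (so that, up to norm equivalence, $f\in\bW^r_2$ means $\sum_k w(k)^2|\hat f(k)|^2\le 1$), and assuming $\sum_{j=1}^m\la_j=1$ (which we may assume for the upper bound, and which holds for the extremal cubature in the lower bound argument below), one has
\be\label{IT2Fourier}
\|f\|_2^2-\sum_{j=1}^m\la_j f(\xi^j)^2=-\sum_{l\ne 0}\widehat{\mu_\xi}(l)\,\widehat{f^2}(l),\qquad \widehat{\mu_\xi}(l):=\sum_{j=1}^m\la_j e^{i\<l,\xi^j\>},\quad \widehat{f^2}(l)=\sum_k\hat f(k)\hat f(l-k).
\ee
The first thing I would record is the elementary bound: for $f\in\bW^r_2$,
$$
|\widehat{f^2}(l)|\le\sum_k|\hat f(k)|\,|\hat f(l-k)|\le\frac{C(d,r)}{w(l)},
$$
which follows from the coordinatewise inequality $\max(1,|k_i|)\max(1,|l_i-k_i|)\ge\tfrac12\max(1,|l_i|)$ (hence $w(k)w(l-k)\ge 2^{-dr}w(l)$) together with the Cauchy--Schwarz inequality applied to $\sum_k\big(w(k)|\hat f(k)|\big)\big(w(l-k)|\hat f(l-k)|\big)$. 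The assumption $r>1/2$ ensures $\sum_k w(k)^{-2}<\infty$, hence $\bW^r_2\hookrightarrow C(\Td)$, so that $f^2$ and its point values are well defined.

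\emph{Upper bound.} Feeding the pointwise estimate into (\ref{IT2Fourier}) for a Frolov-type cubature with $m$ nodes (equal weights, $\sum\la_j=1$, $\widehat{\mu_\xi}$ supported on an admissible dual lattice $\La^\perp$ with $\prod_i|p_i|\gtrsim m$ on $\La^\perp\setminus\{0\}$) already gives $er_m^o(\bW^r_2,L_2)\le C(d,r)\sum_{0\ne p\in\La^\perp}w(p)^{-1}$, which for $r>1$ is $\asymp m^{-r}(\log m)^{d-1}$ --- the rate of optimal cubature on $\bW^r_1$, consistent with the fact that $f^2\in\bW^r_1$ with $\|f^2\|_{\bW^r_1}\lesssim\|f\|_{\bW^r_2}^2$ by the Leibniz rule and Cauchy--Schwarz. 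To obtain the sharp power $(\log m)^{(d-1)/2}$ (and to reach the full range $r>1/2$) one must exploit that $\widehat{f^2}$ is the \emph{convolution square} $\hat f\ast\hat f$ of a sequence in a weighted $\ell_2$-ball, not merely an element of a weighted $\ell_\infty$-ball. Concretely I would take a cubature that is simultaneously exact on the trigonometric polynomials with frequencies in a hyperbolic cross $Q$ of cardinality $\asymp m$ and an $L_2$-Marcinkiewicz discretization with constants close to $1$ for $Q$, write $f=S_Qf+(f-S_Qf)$ with $S_Q$ the orthogonal $L_2$-projection onto $Q$, so that after squaring the "bulk'' $S_Qf$ is handled exactly, and estimate the cross terms by Cauchy--Schwarz with respect to the discrete measure $\sum\la_j\de_{\xi^j}$ in terms of the $L_2$-norms of the dyadic blocks $\delta_s(f)$; since $\|f\|_{\bW^r_2}^2\asymp\sum_s 2^{2r(s_1+\cdots+s_d)}\|\delta_s(f)\|_2^2$ these decay geometrically in the block level, and summing the resulting geometric series produces only the factor $(\log m)^{(d-1)/2}$. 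This dichotomy is, I expect, the content of Theorems \ref{AT1} (a first bound) and \ref{CT2} (the sharp one). The genuinely delicate point of the whole proof is precisely this passage from the $\ell_\infty$- to the $\ell_2$-type information on $\widehat{f^2}$ --- equivalently, controlling the discrete $L_2$-norm of the high-frequency part of $f$ against its true $L_2$-norm --- together with the balancing of the size of $Q$ (exactness of the bulk) against the tail estimate.

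\emph{Lower bound.} For the matching lower bound (inequality (\ref{C6})) I would fool an \emph{arbitrary} cubature $(\xi,\La)$ with $m$ nodes. Let $E:=\sup_{f\in\bW^r_2}er(f,\xi,\La)$ be its discretization error and $\eta:=\sup\{|\int_{\Td}g\,d\mu-\sum_{j=1}^m\la_j g(\xi^j)|:\ g\in\bW^r_2\}$ its numerical-integration error (finite, since $\bW^r_2\hookrightarrow C$); by the classical lower bound for optimal cubature on $\bW^r_2$ one has $\eta\ge\kappa_m(\bW^r_2)\gtrsim m^{-r}(\log m)^{(d-1)/2}$. Choose $g_0\in\bW^r_2$ with $\int g_0\,d\mu-\sum_j\la_j g_0(\xi^j)=\pm\eta$ (up to an arbitrarily small loss); since a constant function belongs to $\bW^r_2$ up to a fixed rescaling, $f:=\tfrac12(\mathbf 1+g_0)\in\bW^r_2$. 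Expanding the square and using $\<\mathbf 1,g_0\>=\int g_0\,d\mu$ and $\sum_j\la_j=\sum_j\la_j\mathbf 1(\xi^j)$,
$$
er(f,\xi,\La)=\frac14\left|\Big(1-\sum_j\la_j\Big)+2\Big(\int_{\Td} g_0\,d\mu-\sum_j\la_j g_0(\xi^j)\Big)+\Big(\|g_0\|_2^2-\sum_j\la_j g_0(\xi^j)^2\Big)\right|.
$$
The first bracket has absolute value $er(\mathbf 1,\xi,\La)\le E$, the third has absolute value $er(g_0,\xi,\La)\le E$, and the middle bracket has absolute value $2\eta$; so by the triangle inequality $E\ge er(f,\xi,\La)\ge\tfrac14(2\eta-E-E)$, i.e. $E\ge\eta/3$. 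Taking the infimum over all cubatures gives $er_m^o(\bW^r_2,L_2)\ge\tfrac13\kappa_m(\bW^r_2)\gtrsim m^{-r}(\log m)^{(d-1)/2}$, the only nontrivial input being the classical lower bound for optimal cubature on $\bW^r_2$.
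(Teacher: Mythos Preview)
Your lower bound is correct and is essentially the paper's argument (Theorem~\ref{CT2}): there one takes both $f^{+}=(f+1)/2$ and $f^{-}=(f-1)/2$ and uses $(f^{+})^2-(f^{-})^2=f$, obtaining the constant $1/2$ in place of your $1/3$. Note in passing that Theorem~\ref{CT2} is the \emph{lower} bound, not a sharper upper bound as your sketch suggests.

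The upper bound has a genuine gap: you never establish the sharp rate, and the missing idea is that $\bW^r_2$ is a \emph{quasi-algebra}, i.e.\ $f\in\bW^r_2\Rightarrow f^2/C_0\in\bW^r_2$. The only step you actually complete --- the pointwise estimate $|\widehat{f^2}(l)|\le C(d,r)/w(l)$ --- puts $f^2$ merely into a multiple of $\bE^r$ and gives $m^{-r}(\log m)^{d-1}$. You rightly observe that one must use the $\ell_2$-structure of $\hat f$, but your proposed route (an $m$-point cubature that is simultaneously exact on a hyperbolic cross of size $\asymp m$ and an $L_2$-Marcinkiewicz set for it, followed by a cross-term analysis) is neither constructed nor carried out. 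The paper's route is direct: apply Cauchy--Schwarz to $\widehat{f^2}(l)=\sum_k\hat f(k)\hat f(l-k)$ so as to separate the factor $w(k)^{-1}w(l-k)^{-1}$, use $\sum_k w(k)^{-2}w(l-k)^{-2}\le C_0^2\,w(l)^{-2}$ (Lemma~\ref{AL1}, proved by the same $|k|\gtrless|l|/2$ split underlying your pointwise inequality, together with $r>1/2$), and sum over $l$ to get $\sum_l w(l)^2|\widehat{f^2}(l)|^2\le C_0^2$. With $f^2\in C_0\bW^r_2$, the discretization error for $f$ equals the integration error for $f^2$ on $C_0\bW^r_2$ (Proposition~\ref{AP2}), and the known Frolov bound $\kappa_m(\bW^r_2)\le C(d,r)\,m^{-r}(\log m)^{(d-1)/2}$ for $r>1/2$ finishes the proof at once (Theorem~\ref{AT1}). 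No bulk/tail splitting and no discrete Marcinkiewicz property is needed.
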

The proof of upper bound in Theorem \ref{IT2} is given in Section \ref{sc}. It  uses deep results from numerical integration of functions from $\bW^r_2$. The lower bound in Theorem \ref{IT2} is proved in Section \ref{lb}.
 It is well known (see \cite{DPTT}) how numerical integration can be used in a problem of exact discretization of $L_q$ norm of elements of finite dimensional subspaces 
in case $q$ is an even integer.  We present some results on sampling discretization of the $L_q$ norm, $q$ is an even integer, with absolute error at the end of the paper. 

As we have mentioned above results from the supervised learning theory are used in the proof of Theorem \ref{IT1}. For the reader's convenience we present a brief introduction to the supervised learning theory and formulate results which we use.

\section{Probabilistic approach}
\label{pa}

\subsection{Some classical results}\label{scr}
We begin with the well known Monte Carlo method. For the readers convenience we present here the classical argument on the error bound for the Monte Carlo method. Let $\Omega$ be a bounded domain of $\R^d$. 
Consider a real function $f\in L_2(\Omega):=L_2(\Omega,\mu)$ with respect to a probability measure $\mu$. Define $\Omega^m:= \Omega\times\cdots\times\Omega$ and $\mu^m:=\mu \times\cdots\times \mu$. For $\bx^j\in \Omega$ denote $\bz:= (\bx^1,\dots,\bx^m)\in \Omega^m$ and for $g\in L_1(\Omega^m, \mu^m)$
$$
\bbE(g):= \int_{\Omega^m}g(\bz)d\mu^m.
$$
Then we have for $f\in L_2(\Omega,\mu)$
$$
\bbE\left(\left(\int_\Omega f d\mu - \frac{1}{m}\sum_{j=1}^m f(\bx^j)\right)^2\right) 
$$
$$
= \bbE\left(\left(\int_\Omega f d\mu\right)^2 - \frac{2}{m}\left(\int_\Omega f d\mu\right)\left(\sum_{j=1}^m f(\bx^j)\right) +\frac{1}{m^2}\sum_{i,j=1}^m f(\bx^i) f(\bx^j)\right)
$$
$$
= \left(\int_\Omega f d\mu\right)^2 -2\left(\int_\Omega f d\mu\right)^2 + \frac{m(m-1)}{m^2}\left(\int_\Omega f d\mu\right)^2 + \frac{1}{m}\int_\Omega f^2d\mu
$$
$$
=\frac{1}{m} \left(\int_\Omega f^2d\mu -\left(\int_\Omega f d\mu\right)^2\right) \le \frac{1}{m} \int_\Omega f^2d\mu = \|f\|_2^2/m.
$$

In particular, the above argument, which uses expectation $\bbE(\cdot)$, guarantees existence of a cubature formula $(\xi,\La)$, $\xi:=\{\xi^j\}_{j=1}^m$, $\La:=\{\la_j\}_{j=1}^m$, $\Lambda_m(\cdot,\xi):= \sum_{j=1}^m \la_j f(\xi^j)$ such that
\be\label{B1}
\left|\int_\Omega fd\mu -\Lambda_m(f,\xi)\right| \le m^{-1/2}\|f\|_2.
\ee
The use of expectation does not provide a good bound on probability to guarantee a tight error bound alike (\ref{B1}). The concentration measure inequalities, which we formulate momentarily, provide a  very good bound on probability under some extra assumptions on $f$. Under condition $\|f\|_\infty \le M$ the Hoeffding's inequality (see, for instance, \cite{Tbook}, p.197) gives
\be\label{B2}
\mu^m\left\{\bz: \left|\int_\Omega fd\mu - \frac{1}{m}\sum_{j=1}^m f(\bx^j)\right|\ge \eta\right\}\le 2\exp\left(-\frac{m\eta^2}{8M^2}\right).
\ee
The Bernstein's inequality (see, for instance, \cite{Tbook}, p.198) gives the following bound 
under conditions $\|f\|_\infty\le M_\infty$ and $\|f\|_2 \le M_2$
\be\label{B3}
\mu^m\left\{\bz: \left|\int_\Omega fd\mu - \frac{1}{m}\sum_{j=1}^m f(\bx^j)\right|\ge \eta\right\}\le 2\exp\left(-\frac{m\eta^2}{2(M_2^2+2M_\infty\eta/3)}\right).
\ee

The above inequalities (\ref{B2}) and (\ref{B3}) can be used directly for proving existence of good cubature formulas for function classes containing finite number of elements. Denote $|W|$ cardinality of a set $W$. Assume that for all $f\in W$ we have $\|f\|_\infty \le M$. Then, the Hoeffding's inequality (\ref{B2}) gives
\be\label{B4}
\mu^m\left\{\bz: \sup_{f\in W}\left|\int_\Omega fd\mu - \frac{1}{m}\sum_{j=1}^m f(\bx^j)\right|\le \eta\right\}\ge 1- 2|W|\exp\left(-\frac{m\eta^2}{8M^2}\right).
\ee
Thus, in case the right hand side of (\ref{B4}) is positive, inequality (\ref{B4}) guarantees existence of a good cubature formula for the whole class $W$.  

\subsection{Some results from supervised learning theory}\label{lt}

In our further discussion we are interested in discretization of the $L_2$ norm of real functions from a given function class $W$. It turns out that 
this problem is closely related to some problems from supervised learning theory. We give a brief introduction to these problems. This is a vast area of research with a wide range of different settings. In this subsection we only discuss a development of a setting from \cite{CS} (see \cite{Tbook}, Ch.4, for detailed discussion). 

Let $X\subset\R^d$, $Y\subset \R$ be Borel sets, $\rho$ be a Borel probability measure on $Z:=X\times Y$.  For $f:X\to Y$ define {\it the error} 
$$
\cE(f)  :=\int_Z(f(\bx)-y)^2 d\rho.
$$
 
Consider   $\rho_X$ - the marginal probability  measure on $X$ (for $S\subset X$, $\rho_X (S) = \rho(S\times Y)  $). Define
$$
f_\rho(\bx) := \bbE(y|\bx)
$$
to be a conditional expectation of $y$.
The function $f_\rho$ is known in statistics as the {\it regression function} of $\rho$. It is clear that if $f_\rho\in L_2(\rho_X)$ then it minimizes the error $\cE(f)$ over all $f\in L_2(\rho_X)$: $\cE(f_\rho)\le \cE(f)$, $f\in L_2(\rho_X)$. Thus, in the sense of error $\cE(\cdot)$ the regression function $f_\rho$ is the best to describe the relation between inputs $\bx\in X$ and outputs $y\in Y$. The goal is to find an estimator $f_\bz$, on the base of given data $\bz:=((\bx^1,y_1),\dots,(\bx^m,y_m))$ that approximates $f_\rho$  well with high probability. We assume that $(\bx^i,y_i)$, $i=1,\dots,m$ are independent and distributed according to $\rho$.   We measure the error between $f_\bz$ and $f_\rho$ in the $L_2(\rho_X)$ norm. 

We note that a standard setting in the distribution-free theory of regression (see \cite{GKKW}) involves the expectation $\bbE(\|f_\rho- f_\bz\|_{L_2(\rho_X)}^2)$ as a measure of quality of an estimator. An important new feature of  the setting in learning theory formulated in \cite{CS} (see \cite{Tbook} for detailed discussion) is the following.  They propose to 
study systematically the probability distribution function
$$
\rho^m\{\bz:\|f_\rho-f_\bz\|_{L_2(\rho_X)}\ge\eta\}
$$
instead of the expectation. 

There are several important ingredients in mathematical formulation of the learning problem. In our formulation we follow the way that has become standard in approximation theory and based on the concept of {\it optimal method}. 

We begin with a class ${\mathcal M}$ of admissible measures $\rho$. Usually, we impose restrictions on $\rho$ in the form of restrictions on the regression function $f_\rho$: $f_\rho\in \Theta$. Then the first step is to find an optimal estimator for a given class $\Theta$ of priors (we assume $f_\rho \in \Theta$).  In regression theory a usual way to evaluate performance of an estimator $f_\bz$ is by studying its convergence in expectation, i.e. the rate of decay of the quantity $\bbE(\|f_\rho-f_\bz\|^2_{L_2(\rho_X)})$ as the sample size $m$ increases. Here the expectation is taken with respect to the product measure $\rho^m$ defined on $Z^m$. We note that ${\mathcal E}(f_\bz)-{\mathcal E}(f_\rho) = \|f_\bz-f_\rho\|_{L_2(\rho_X)}^2$. As we already mentioned above
a more accurate and more delicate way of evaluating performance of $f_\bz$ has been pushed forward in \cite{CS}.  We concentrate on a discussion of results on  the probability distribution function. 
 
An important question in finding an optimal $f_\bz$ is the following. How to describe the class $\Theta$ of priors? In other words, what characteristics of $\Theta$ govern, say, the optimal rate of decay of $\bbE(\|f_\rho-f_\bz\|^2_{L_2(\rho_X)})$ for $f_\rho\in \Theta$?
Previous and recent works in statistics and learning theory (see, for instance, \cite{DKPT}, \cite{VT110}, and \cite{Tbook}, Ch.4) indicate that the compactness characteristics of $\Theta$ play a fundamental role in the above problem. It is convenient for us to express compactness of $\Theta$ in terms of the entropy numbers. We discuss the classical concept of entropy. We note that some other concepts of entropy, for instance, entropy with bracketing, proved to be useful in the theory of empirical processes and nonparametric statistics (see \cite{VG},   \cite{Va}). There is a concept of $VC$ dimension that plays a fundamental role in the problem of pattern recognition and classification \cite{Va}. This concept is also useful in describing compactness characteristics of sets.  

For a compact subset $\Theta$ of a Banach space $B$ we define the entropy numbers as follows
$$
\e_n(\Theta,B) := \inf\{\e: \exists f_1,\dots,f_{2^n}\in \Theta: \Theta\subset \cup_{j=1}^{2^n} (f_j+\e U(B))\}
$$
where $U(B)$ is the unit ball of a Banach space $B$. We denote $N(\Theta,\e,B)$ the covering number that is the minimal number of balls of radius $\e$ with centers in $\Theta$ needed for covering $\Theta$. The corresponding $\e$-net is denoted by ${\mathcal N}_\e(\Theta,B)$.   
  
In   this subsection we always assume that the measure $\rho$ satisfies the condition $|y|\le M$ (or a little weaker $|y|\le M$ a.e. with respect to $\rho_X$) with some fixed $M$. Then it is clear that for $f_\rho$ we have $|f_\rho(\bx)|\le M$ for all $\bx$ (for almost all $\bx$). Therefore, it is natural to assume that a class $\Theta$ of priors where $f_\rho$ belongs is embedded into the $\C(X)$-ball ($L_\infty$-ball) of radius $M$.  

We define the {\it empirical error} of $f$ as
$$
\E_\bz(f):= \frac{1}{m}\sum_{i=1}^m(f(\bx^i)-y_i)^2.
$$
Let $f\in L_2(\rho_X)$. The {\it defect function} of $f$ is
$$
L_\bz(f) := L_{\bz,\rho}(f) := \E(f)-\E_\bz(f);\quad \bz=(z_1,\dots,z_m),\quad z_i=(\bx^i,y_i).
$$
We are interested in estimating $L_\bz(f)$ for functions $f$ coming from a given class $W$. 
We begin with the case $B$ being 
 $\C(X)$ the space of functions continuous on a compact subset $X$ of $\R^d$ with the norm
$$
\|f\|_\infty:= \sup_{\bx\in X}|f(\bx)|.
$$
  We use the abbreviated notations
$$
N(W,\e):= N(W,\e,\C);\quad \e_n(W):= \e_n(W,\C).
$$
The following well known theorem (see, for instance, \cite{Tbook}, p.211) shows how compactness characteristics of $W$ can be used in estimating the defect function.
 \begin{Theorem}\label{BT1} Let $W$ be a compact subset of $\C(X)$. Assume that  $\rho$ and $W$ satisfy the following condition. Let $M>0$ and for all $f\in W$ we have $|f(\bx)-y| \le M$ a.e. Then, for all $\e>0$
\begin{equation}\label{B5}
\rho^m\{\bz:\sup_{f\in W}|L_\bz(f)|\le\e\} \ge1- N(W,\e/(8M))2\exp(-\frac{m\e^2}{8(\sigma^2+M^2\e/6)}).  
\end{equation}
Here $\sigma^2:=\sigma^2(W) := \sup_{f\in W}\sigma^2((f(\bx)-y)^2)$ and $\sigma^2(g)$ is the variance of a random variable $g$.
\end{Theorem}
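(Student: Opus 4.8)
The plan is to combine a uniform Lipschitz property of the defect function $L_\bz$ on $W$ (with respect to $\|\cdot\|_\infty$) with Bernstein's inequality (\ref{B3}) applied at the points of a finite net, followed by a union bound.

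First I would record an elementary estimate. Using the hypothesis $|f(\bx)-y|\le M$ a.e., for $f_1,f_2\in W$ we have, $\rho$-a.e.,
$$
\bigl|(f_1(\bx)-y)^2-(f_2(\bx)-y)^2\bigr|\le |f_1(\bx)-f_2(\bx)|\bigl(|f_1(\bx)-y|+|f_2(\bx)-y|\bigr)\le 2M\|f_1-f_2\|_\infty .
$$
Integrating against $\rho$ gives $|\cE(f_1)-\cE(f_2)|\le 2M\|f_1-f_2\|_\infty$, and averaging over the sample gives $|\E_\bz(f_1)-\E_\bz(f_2)|\le 2M\|f_1-f_2\|_\infty$ for a.e.\ $\bz$; hence $|L_\bz(f_1)-L_\bz(f_2)|\le 4M\|f_1-f_2\|_\infty$.

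Now fix $\e>0$, set $\dt:=\e/(8M)$, and let $f_1,\dots,f_K$ with $K=N(W,\dt)$ be the centers of a $\dt$-net of $W$ in $\C(X)$. By the Lipschitz bound, every $f\in W$ has a net point $f_j$ with $|L_\bz(f)-L_\bz(f_j)|\le 4M\dt=\e/2$, so
$$
\sup_{f\in W}|L_\bz(f)|\le \frac{\e}{2}+\max_{1\le j\le K}|L_\bz(f_j)| ,
$$
and the event $\{\bz:\sup_{f\in W}|L_\bz(f)|>\e\}$ is contained in $\{\bz:\max_j|L_\bz(f_j)|\ge\e/2\}$. For each fixed $j$ I would apply Bernstein's inequality (cf.\ (\ref{B3}); see \cite{Tbook}, p.198) to the centered random variable $(f_j(\bx)-y)^2-\cE(f_j)$, whose sample average equals $-L_\bz(f_j)$: since $0\le(f_j(\bx)-y)^2\le M^2$ a.e., this variable is bounded in absolute value by $M^2$, and its variance is $\sigma^2((f_j(\bx)-y)^2)\le\sigma^2(W)$. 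Taking deviation level $\e/2$ gives
$$
\rho^m\{\bz:|L_\bz(f_j)|\ge\e/2\}\le 2\exp\Bigl(-\frac{m\e^2}{8(\sigma^2+M^2\e/6)}\Bigr),
$$
and a union bound over $j=1,\dots,K=N(W,\e/(8M))$, combined with the inclusion above, yields (\ref{B5}) after passing to complements.

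The argument is essentially routine; the two points that need care are (i) that Bernstein must be applied to the \emph{centered} variable $(f(\bx)-y)^2-\cE(f)$, bounded using $0\le(f(\bx)-y)^2\le M^2$ — this is what lets the variance $\sigma^2(W)$, rather than a second moment, appear in the exponent; and (ii) the bookkeeping of constants: the net resolution $\e/(8M)$ is forced by the requirement $4M\dt\le\e/2$, while the factor $8$ and the term $M^2\e/6$ arise from substituting $\eta=\e/2$ into the Bernstein denominator $2(\sigma^2+M^2\eta/3)$. Compactness of $W$ is used only to guarantee $N(W,\e/(8M))<\infty$.
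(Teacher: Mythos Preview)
Your proof is correct and is precisely the standard argument. The paper does not prove Theorem~\ref{BT1} at all---it merely cites it as well known (referring to \cite{Tbook}, p.~211)---so there is no in-paper proof to compare against; your net-plus-Bernstein-plus-union-bound argument is exactly the textbook derivation, and your bookkeeping of constants (the choice $\dt=\e/(8M)$ from the $4M$-Lipschitz bound on $L_\bz$, and the substitution $\eta=\e/2$ into the variance form of Bernstein) matches the statement exactly.
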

\begin{Remark}\label{BR1} In general we cannot guarantee that the set \newline $\{\bz:\sup_{f\in W}|L_\bz(f)|\ge\eta\}$ is $\rho^m$-measurable. In such a case the relation (\ref{B5}) and further relations of this type are understood in the sense of outer measure associated with the $\rho^m$. For instance, for (\ref{B5}) this means that there exists $\rho^m$-measurable set $G$ such that $\{\bz:\sup_{f\in W}|L_\bz(f)|\ge\eta\}\subset G$ and (\ref{B5}) holds for $G$.
\end{Remark}

We note that the above theorem is related to the concept of
 the Glivenko-Cantelli sample complexity of a class $\Phi$ with accuracy $\e$ and confidence $\de$:
$$
S_\Phi(\e,\de):= \min\{n:\quad\text{ for \, all}\quad m\ge n, \quad \text{for \, all}\quad \rho 
$$
$$
 \rho^m\{\bz=(z_1,\dots,z_m):\sup_{\phi\in \Phi}|\int_Z \phi d\rho-\frac{1}{m}\sum_{i=1}^m\phi(z_i)|\ge\e\} \le \de\}.
$$
In order to see that we define $z_i:=(\bx^i,y_i)$, $i=1,\dots,m$; $\phi(\bx,y):=(f(\bx)-y)^2$; $\Phi:=\{(f(\bx)-y)^2, f\in W\}$. One can find a survey of results on the Glivenko-Cantelli sample complexity in \cite{Me} and find results and the corresponding historical remarks related to Theorem \ref{BT1}  in \cite{GKKW}.

We now formulate two theorems, which provide somewhat more delicate estimates for the defect function (see \cite{Tbook}, pp. 213--217). We assume that $\rho$ and $W$ satisfy the following condition. 
\begin{equation}\label{B6}
\text{For\, all}\quad f\in W,\quad f:X\to Y\quad \text{ is\, such\, that} \quad |f(\bx)-y| \le M \quad \text{ a.e.}  
\end{equation}
The following Theorems \ref{BT2}, \ref{BT3} and Corollaries \ref{BC1}, \ref{BC2} are from \cite{VT98} (see also \cite{Tbook}, section 4.3.3, p.213).

\begin{Theorem}\label{BT2} Assume that $\rho$, $W$ satisfy (\ref{B6}) and $W$ is such that 
\begin{equation}\label{B7}
\sum_{n=1}^\infty n^{-1/2}\e_n(W) <\infty.  
\end{equation}
Then for $m\eta^2\ge 1$ we have
$$
\rho^m\{\bz:\sup_{f\in W}|L_\bz(f)|\ge \eta\} \le C(M,\e(W))\exp(-c(M)m\eta^2).
$$
with $C(M,\e(W))$ that may depend on $M$ and $\e(W):=\{\e_n(W,\C)\}$; $c(M)$ may depend only on $M$.
\end{Theorem}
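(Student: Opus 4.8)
The plan is a chaining argument over a nested family of $\e$-nets of $W$, feeding Bernstein's inequality (\ref{B3}) into each link of the chain; this is the standard route in learning theory for replacing the single-scale estimate behind Theorem \ref{BT1}, which costs $\ln N(W,\eta/(8M))$ in the exponent, by a bound requiring only $m\eta^2\ge1$.

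First I would pass from the defect function of $W$ to that of its loss class. Writing $\phi_f(z):=(f(\bx)-y)^2$ for $z=(\bx,y)$, the functional $\phi\mapsto\int_Z\phi\,d\rho-\frac1m\sum_i\phi(z_i)$ is linear, so $L_\bz(f)-L_\bz(g)=L_\bz(\phi_f-\phi_g)$; from $\phi_f-\phi_g=(f-g)(f+g-2y)$ and $|f+g-2y|\le2M$ $\rho$-a.e.\ (by (\ref{B6})) one reads off the pointwise bound $|\phi_f(z)-\phi_g(z)|\le2M|f(\bx)-g(\bx)|$, and hence the \emph{deterministic} estimate $|L_\bz(f)-L_\bz(g)|\le4M\|f-g\|_\infty$, together with $0\le\phi_f\le M^2$ and $\sigma^2(\phi_f)\le M^4$ for $f\in W$. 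In particular $|L_\bz(f)|\le M^2$, so I may assume $\eta\le M^2$; and since $y$ and $f(\bx)-y$ are both bounded by $M$ a.e., the diameter of $W$ in $\C(X)$, restricted to $\operatorname{supp}\rho_X$, is $\lesssim M$. Now build a truncated chain: fix $k_0$ with $2^{-k_0}$ at least that diameter and $k^*$ minimal with $4M2^{-k^*}\le\eta/4$ (so $k^*-k_0\lesssim\log(M/\eta)$); for $k_0\le k\le k^*$ let ${\mathcal N}_k$ be a $2^{-k}$-net of $W$ with centers in $W$, of cardinality $N_k:=N(W,2^{-k})$ (hence ${\mathcal N}_{k_0}$ is a single point $f^0$), and for $f\in W$ pick $f_k\in{\mathcal N}_k$ with $\|f-f_k\|_\infty\le2^{-k}$. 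The deterministic bound disposes of the tail, $|L_\bz(f)-L_\bz(\phi_{f_{k^*}})|\le4M2^{-k^*}\le\eta/4$, while the finite telescoping identity
$$
L_\bz(\phi_{f_{k^*}})=L_\bz(\phi_{f^0})+\sum_{k=k_0}^{k^*-1}\bigl(L_\bz(\phi_{f_{k+1}})-L_\bz(\phi_{f_k})\bigr)
$$
reduces the rest to the links, the increments $\psi_k:=\phi_{f_{k+1}}-\phi_{f_k}$ obeying $\|\psi_k\|_\infty\le4M2^{-k}$, $\sigma^2(\psi_k)\le16M^2 2^{-2k}$, and taking at most $N_kN_{k+1}$ values as $f$ varies.

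Then I would apply Bernstein's inequality to $L_\bz(\phi_{f^0})$ at deviation $\eta/4$ and, at each level $k$, to the increments at deviation $t_k$, followed by a union bound over the $N_kN_{k+1}$ increments and a sum over the $\lesssim\log(M/\eta)$ levels (enumerated by $i$). Each $t_k$ is built, in the standard way, as a sum of three pieces: a term $\asymp M2^{-k}\sqrt{(\ln N_kN_{k+1})/m}$ so that the Bernstein exponent absorbs the entropy cost $\ln(N_kN_{k+1})$; a term $\asymp M2^{-k}\sqrt{(\ln(i+1))/m}$ contributing an extra $2\ln(i+1)$ to that exponent, which pays for the growing number of levels (making $\sum_i(i+1)^{-2}$ convergent); and a term $\asymp M^{-1}2^{-k}\eta$ that makes the exponent contain $c(M)m\eta^2$. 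Using $\operatorname{diam}_\C(W)\lesssim M$, the first two pieces sum over $k$ to $\lesssim C(M)\bigl(1+\sum_n n^{-1/2}\e_n(W)\bigr)/\sqrt m$ (the series finite by hypothesis (\ref{B7})) and the third to $\lesssim\eta$, so $\sum_k t_k\le\eta/2$ whenever $m\eta^2\ge c_1(M)\bigl(1+\sum_n n^{-1/2}\e_n(W)\bigr)^2$. For $m\eta^2\ge1$ below that threshold the asserted inequality is trivially true after enlarging the prefactor — which is exactly why $C$ is allowed to depend on the whole sequence $\e(W)$ while the exponent constant depends only on $M$. Above the threshold, combining the head-term estimate $\le2\exp(-c(M)m\eta^2)$ with $\sum_i2\exp\bigl(-\ln(N_kN_{k+1})-2\ln(i+1)-c(M)m\eta^2\bigr)\le C\exp(-c(M)m\eta^2)$ gives the claim.

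The real work is the bookkeeping in this last step: the thresholds $t_k$ must be large enough that, level by level, Bernstein overwhelms the union-bound cost there, small enough that they sum below $\eta$, and allocated so that the surviving series of tail probabilities collapses to a single $C\exp(-c(M)m\eta^2)$ rather than picking up an extra $\log(M/\eta)$ in front or a $\log$ inside the exponent — which is what forces the truncation of the chain at scale $\asymp\eta$, the use of $\operatorname{diam}_\C(W)\lesssim M$, and the level-counting term $2\ln(i+1)$. A minor additional wrinkle is that Bernstein's bound has two regimes: at the coarsest scales $2^{-k}\asymp\operatorname{diam}_\C(W)$ one is variance-dominated rather than sub-Gaussian, which must be checked separately but causes no real difficulty given $m\eta^2\ge1$ and $\eta\le M^2$.
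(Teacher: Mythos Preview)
The paper does not prove Theorem~\ref{BT2}; it is quoted from \cite{VT98} (with a pointer to \cite{Tbook}, section~4.3.3), so there is no in-paper argument to compare against. Your chaining scheme is the standard route to such bounds and is essentially correct: pass to the loss class via $|\phi_f-\phi_g|\le 2M\|f-g\|_\infty$, telescope along nested nets truncated at scale $\asymp\eta/M$, apply Bernstein link by link with thresholds $t_k$ split into an entropy piece, a level-counting piece, and an $\eta$-piece, then sum. The identification $\sum_k 2^{-k}\sqrt{\ln N(W,2^{-k})}\asymp\sum_n n^{-1/2}\e_n(W)$ is the usual Dudley equivalence, and your handling of the sub-threshold range $1\le m\eta^2<c_1(M)(1+S)^2$ by inflating $C$ is exactly the reason the constant is allowed to depend on the full sequence $\e(W)$.

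Two places to tighten. First, the diameter claim $\operatorname{diam}_\C(W)\lesssim M$ on $\operatorname{supp}\rho_X$ relies on the standing hypothesis $|y|\le M$ stated earlier in the subsection (so $|f|\le|f-y|+|y|\le 2M$ a.e.); say so explicitly, and note that restricting from $\C(X)$ to $\C(\operatorname{supp}\rho_X)$ can only shrink the entropy numbers, so (\ref{B7}) survives. Second, the ``two-regime'' wrinkle is disposed of as follows: above your threshold one has, for every $k\le k^*$, the term-wise bound $2^{-k}\sqrt{\ln N_k}\lesssim S$, hence $\ln N_k\lesssim S^2 2^{2k}\le S^2 2^{2k^*}\asymp S^2M^2/\eta^2\lesssim c_1(M)^{-1}M^2\,m$, so choosing $c_1(M)\ge C M^2$ forces $m\gtrsim\ln N_k$ at every level and keeps each link in the sub-Gaussian window of Bernstein. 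With those two clarifications the argument goes through.
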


\begin{Theorem}\label{BT3} Assume that $\rho$, $W$ satisfy (\ref{B6}) and $W$ is such that
$$
\sum_{n=1}^\infty n^{-1/2}\e_n(W) =\infty.
$$
For $\eta>0$ define $J:=J(\eta/M)$ as the minimal $j$ satisfying $\e_{2^j}\le \eta/(8M)$ and
$$
S_J:= \sum_{j=1}^J2^{(j+1)/2}\e_{2^{j-1}}.
$$
Then for $m$, $\eta$ satisfying $m(\eta/S_J)^2 \ge 480M^2$ we have
$$
\rho^m\{\bz:\sup_{f\in W} |L_\bz(f)|\ge \eta\} \le C(M,\e(W))\exp(-c(M)m(\eta/S_J)^2).
$$
\end{Theorem}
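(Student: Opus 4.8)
The plan is to prove Theorem~\ref{BT3} by a Dudley-type chaining argument along the dyadic entropy scales of $W$, with Bernstein's inequality (\ref{B3}) applied link by link; the crux is the precise calibration of the chaining budget, which is exactly what the quantity $S_J$ encodes. The first step is to pass from functions to the random variables $g_f(z):=(f(\bx)-y)^2$ on $Z$, for which $L_\bz(f)$ is precisely the centered empirical deviation $\int_Z g_f\,d\rho-\frac1m\sum_{i=1}^m g_f(z_i)$. Condition (\ref{B6}) gives $0\le g_f\le M^2$ a.e., and the elementary identity $g_f-g_h=(f-h)(f+h-2y)$ together with $|f(\bx)-y|,|h(\bx)-y|\le M$ a.e.\ gives $|g_f(z)-g_h(z)|\le 2M\|f-h\|_\infty$ for $\rho$-a.e.\ $z$. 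Consequently $|L_\bz(f)-L_\bz(h)|\le 4M\|f-h\|_\infty$ deterministically, while $g_f-g_h$ has variance at most $4M^2\|f-h\|_\infty^2$; the smallness of this variance on short links is exactly what makes (\ref{B3}) sharp enough for chaining.

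Next, for each $j\ge0$ I fix a net $\mathcal N_j\subset W$ with $|\mathcal N_j|\le 2^{2^j}$ and $\sup_{f\in W}\operatorname{dist}_\infty(f,\mathcal N_j)\le\e_{2^j}$, and for $f\in W$ I let $f^{(j)}\in\mathcal N_j$ be a nearest point, so that $\|f^{(j)}-f^{(j-1)}\|_\infty\le2\e_{2^{j-1}}$ and, with $J$ the minimal index with $\e_{2^J}\le\eta/(8M)$, $\|f-f^{(J)}\|_\infty\le\eta/(8M)$. Telescoping $L_\bz(f)$ through the chain $f^{(0)},\dots,f^{(J)}$, the tail increment $L_\bz(f)-L_\bz(f^{(J)})$ is at most $4M\cdot\eta/(8M)=\eta/2$ uniformly in $f$ and deterministically, so it remains to bound the single coarse term $L_\bz(f^{(0)})$ (at most two points, handled by a bare application of (\ref{B3}), affecting only the constant) and the chaining increments $L_\bz(f^{(j)})-L_\bz(f^{(j-1)})$, $1\le j\le J$. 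To each such level I assign the deviation budget $\eta_j:=\frac{\eta}{2S_J}\,2^{(j+1)/2}\e_{2^{j-1}}$, so that $\sum_{j=1}^J\eta_j=\eta/2$; there are at most $|\mathcal N_{j-1}||\mathcal N_j|\le2^{2^{j+1}}$ relevant links at level $j$, each a centered deviation of a variable with sup-bound $4M\e_{2^{j-1}}$ and variance at most $16M^2\e_{2^{j-1}}^2$, so (\ref{B3}) together with a union bound gives
$$\rho^m\Bigl\{\sup_{f\in W}\bigl|L_\bz(f^{(j)})-L_\bz(f^{(j-1)})\bigr|\ge\eta_j\Bigr\}\le 2^{2^{j+1}+1}\exp\!\Bigl(-\frac{m\eta_j^2}{2\bigl(16M^2\e_{2^{j-1}}^2+\tfrac83M\e_{2^{j-1}}\eta_j\bigr)}\Bigr).$$

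Summing this over $j$ is the final step. Substituting $\eta_j$ gives $\frac{m\eta_j^2}{16M^2\e_{2^{j-1}}^2}=\frac{m(\eta/S_J)^2}{64M^2}\,2^{j+1}$, so the hypothesis $m(\eta/S_J)^2\ge480M^2$ makes the leading part of the Bernstein exponent a fixed multiple of $2^{j+1}$ that comfortably beats the entropy factor $\log(2^{2^{j+1}+1})\sim2^{j+1}$. The lower-order term $\tfrac83M\e_{2^{j-1}}\eta_j$ is harmless: minimality of $J$ forces $\e_{2^{J-1}}>\eta/(8M)$, and since $S_J\ge2^{(J+1)/2}\e_{2^{J-1}}$ this gives $\eta/S_J\le8M\,2^{-(J+1)/2}$, whence $2^{j+1}(\eta/S_J)^2\le64M^2$ and therefore $\tfrac83M\e_{2^{j-1}}\eta_j\le16M^2\e_{2^{j-1}}^2$ for every $j\le J$. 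Hence each summand is at most $\exp(-c(M)\,m(\eta/S_J)^2\,2^j)$, the geometric-type series converges, and combining its sum with the $L_\bz(f^{(0)})$ estimate yields the asserted bound $C(M,\e(W))\exp(-c(M)m(\eta/S_J)^2)$. I expect the real work to lie in exactly this calibration: one must split $\eta/2$ among the $J$ chaining levels so that the per-level deviations add up correctly while simultaneously forcing the Bernstein exponent to outrun the doubly-exponential covering count $2^{2^{j+1}}$ at every scale, and then extract a clean explicit threshold such as $480M^2$ while keeping both terms of (\ref{B3}) in hand — this interplay is precisely why the weighted sum $S_J=\sum_{j=1}^J2^{(j+1)/2}\e_{2^{j-1}}$, rather than any individual entropy number, governs the exponent. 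As in Remark~\ref{BR1}, the supremum over the uncountable class $W$ raises no genuine measurability issue here, since the whole argument proceeds through the finite nets $\mathcal N_j$ and the deterministic tail estimate, so that every event in play is $\rho^m$-measurable.
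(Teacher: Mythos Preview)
The paper does not prove Theorem~\ref{BT3}: it is quoted from \cite{VT98} (see also \cite{Tbook}, \S4.3.3), so there is no in-paper argument to compare your proposal against. Your sketch --- chaining along dyadic nets $\mathcal N_j$ with Bernstein's inequality (\ref{B3}) on each link, the deterministic cut at level $J$, and the budget $\eta_j\propto 2^{(j+1)/2}\e_{2^{j-1}}$ --- is exactly the method of those references, and your control of the lower-order Bernstein term via $\eta/S_J\le 8M\,2^{-(J+1)/2}$ (from the minimality of $J$) is the right mechanism.

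Two small points of bookkeeping. First, your three pieces overrun the total: the deterministic tail costs $\eta/2$, the chaining budget $\sum_{j=1}^J\eta_j=\eta/2$, and you still need something positive for $|L_\bz(f^{(0)})|$. You must re-split (e.g.\ $\eta/4+\eta/2+\eta/4$); this only perturbs the absolute constants. Second, ``a bare application of (\ref{B3})'' to $g_{f^{(0)}}$ --- a variable with sup-bound $M^2$ --- produces an exponent of order $m\eta^2/M^4$, which is \emph{not} automatically $\ge c(M)m(\eta/S_J)^2$ when $S_J$ is small, so the claim that this step ``affects only the constant'' is not justified as stated. The usual remedy is to treat the passage to $f^{(0)}$ itself as the $j=0$ link of the chain (from a single anchor point in $W$, link length $\le 2\e_1$), give it a budget of the same shape as the $\eta_j$, and absorb the extra $O(\e_1)$ contribution into the constant $C(M,\e(W))$, which is explicitly allowed to depend on the entropy sequence. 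With those adjustments your argument goes through.
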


\begin{Corollary}\label{BC1} Assume $\rho$, $W$ satisfy (\ref{B6}) and $\e_n(W)\le Dn^{-1/2}$.
Then for $m$, $\eta$ satisfying $m(\eta/(1+\log(M/\eta)))^2 \ge C_1(M,D)$ we have
$$
\rho^m\{\bz:\sup_{f\in W} |L_\bz(f)|\ge \eta\} \le C(M,D)\exp(-c(M,D)m(\eta/(1+\log (M/\eta)))^2).
$$
\end{Corollary}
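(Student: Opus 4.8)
The plan is to deduce this directly from Theorem \ref{BT3}; the only real content is to bound the two quantities $J = J(\eta/M)$ and $S_J$ that appear there in terms of $M$, $D$, and $\log(M/\eta)$. First I would dispose of the uninteresting ranges of $\eta$: enlarging $D$ I may assume $D\ge 1$, and since (\ref{B6}) gives $(f(\bx)-y)^2\le M^2$ a.e., hence $\E(f),\E_\bz(f)\le M^2$ and $|L_\bz(f)|\le M^2$ for every $f\in W$, the probability in question is $0$ for $\eta>M^2$; so I may as well work with $\eta\le M$ (the band $M<\eta\le M^2$ only forces a harmless change of absolute constants, as there $1+\log(M/\eta)$ stays between two fixed constants). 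Also, if the series $\sum_n n^{-1/2}\e_n(W)$ converges, Theorem \ref{BT2} already gives a bound of the stronger form $C(M,D)\exp(-c(M)m\eta^2)$; so I may assume the series diverges, which is exactly the standing hypothesis of Theorem \ref{BT3}.

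Next I would estimate $J$. Since $\e_{2^j}(W)\le D2^{-j/2}$, the inequality $\e_{2^j}\le \eta/(8M)$ holds as soon as $D2^{-j/2}\le \eta/(8M)$, i.e. $j\ge 2\log_2(8MD/\eta)$; hence $J\le 2\log_2(8MD/\eta)+1$, and using $D\ge 1$, $\eta\le M$ this is at most $C_2(D)\bigl(1+\log(M/\eta)\bigr)$ for a suitable $C_2(D)$. Then for $S_J$ I would bound each term by the same rate:
\[
S_J=\sum_{j=1}^{J}2^{(j+1)/2}\e_{2^{j-1}}(W)\le \sum_{j=1}^{J}2^{(j+1)/2}\,D\,2^{-(j-1)/2}=2D\,J\le C_3(D)\bigl(1+\log(M/\eta)\bigr),
\]
with $C_3(D):=2D\,C_2(D)$. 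The telescoping of the powers of $2$ is precisely why the rate $\e_n(W)\le Dn^{-1/2}$ is the borderline case: each summand contributes only an $O(D)$ amount, so $S_J$ grows just like $J\sim\log(M/\eta)$ rather than like a power of $1/\eta$.

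Finally I would feed these two estimates into Theorem \ref{BT3}. From $S_J\le C_3(D)\bigl(1+\log(M/\eta)\bigr)$ we get
\[
m\Bigl(\frac{\eta}{S_J}\Bigr)^2\ \ge\ \frac{1}{C_3(D)^2}\,m\Bigl(\frac{\eta}{1+\log(M/\eta)}\Bigr)^2,
\]
so the hypothesis $m(\eta/S_J)^2\ge 480M^2$ of Theorem \ref{BT3} is implied by $m\bigl(\eta/(1+\log(M/\eta))\bigr)^2\ge C_1(M,D)$ with $C_1(M,D):=480M^2C_3(D)^2$; and the conclusion of Theorem \ref{BT3} then reads
\[
\rho^m\{\bz:\sup_{f\in W}|L_\bz(f)|\ge\eta\}\ \le\ C(M,\e(W))\exp\!\Bigl(-\tfrac{c(M)}{C_3(D)^2}\,m\bigl(\tfrac{\eta}{1+\log(M/\eta)}\bigr)^2\Bigr),
\]
which is the asserted bound with $c(M,D):=c(M)/C_3(D)^2$. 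The only step that needs a bit of care — and the main ``obstacle'', such as it is — is the prefactor $C(M,\e(W))$: Theorem \ref{BT3} permits it to depend on the whole sequence $\e(W)$, so to legitimately replace it by a constant $C(M,D)$ I would check, from the construction inside the proof of Theorem \ref{BT3}, that this dependence enters only through quantities that are monotone in the $\e_n(W)$ (sums of the type appearing in $S_J$ and in the choice of $J$), all of which are then controlled by $\e_n(W)\le Dn^{-1/2}$. Beyond that, everything is the elementary bookkeeping above.
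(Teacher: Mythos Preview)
The paper does not actually give a proof of Corollary~\ref{BC1}; it is merely quoted, together with Theorems~\ref{BT2}, \ref{BT3} and Corollary~\ref{BC2}, from \cite{VT98} (see also \cite{Tbook}, Section~4.3.3). Your derivation is exactly the intended one: under the hypothesis $\e_n(W)\le Dn^{-1/2}$ one has $J\le C_2(D)(1+\log(M/\eta))$ and $S_J\le 2DJ\le C_3(D)(1+\log(M/\eta))$, and plugging these into Theorem~\ref{BT3} yields the stated bound. Your handling of the trivial ranges of $\eta$ and of the case when Theorem~\ref{BT2} applies is also correct. The only point you flag yourself --- that $C(M,\e(W))$ in Theorem~\ref{BT3} must be shown to reduce to a $C(M,D)$ --- is indeed the only place requiring inspection of the proof of Theorem~\ref{BT3}; in \cite{VT98} this constant enters through sums of the same type as $S_J$ and is therefore controlled by the bound $\e_n(W)\le Dn^{-1/2}$, exactly as you anticipate.
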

\begin{Corollary}\label{BC2} Assume $\rho$, $W$ satisfy (\ref{B6}) and 
$\e_n(W)\le Dn^{-r}$, $r\in (0,1/2)$.
Then for $m$, $\eta$ satisfying $m \eta^{1/r} \ge C_1(M,D,r)$ we have
$$
\rho^m\{\bz:\sup_{f\in W} |L_\bz(f)|\ge \eta\} \le C(M,D,r)\exp(-c(M,D,r)m\eta^{1/r} ).
$$
\end{Corollary}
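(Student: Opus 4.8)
The plan is to apply Theorem~\ref{BT3} directly, so that the proof reduces to evaluating the two quantities $J=J(\eta/M)$ and $S_J$ occurring there under the envelope $\e_n(W)\le Dn^{-r}$, and then checking that the resulting exponent and threshold have the claimed shape. Note first that this is the regime of Theorem~\ref{BT3} rather than of Theorem~\ref{BT2}, since for $r<1/2$ the majorant $\sum_n n^{-1/2}\cdot Dn^{-r}=D\sum_n n^{-1/2-r}$ diverges. We may also assume $\eta\le M^2$: for larger $\eta$ the left-hand side is zero, because (\ref{B6}) forces $|L_\bz(f)|\le M^2$ a.e.

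\emph{Estimating $J$ and $S_J$.} From $\e_{2^j}\le D2^{-jr}$, the inequality $\e_{2^j}\le\eta/(8M)$ holds as soon as $2^{jr}\ge 8MD/\eta$, so
$$
J\le r^{-1}\log_2(8MD/\eta)+1,\qquad\text{hence}\qquad 2^J\le c_0(M,D,r)\,\eta^{-1/r}
$$
(if instead $J=0$, Theorem~\ref{BT3} already yields the probability bound $0$, so we may assume $J\ge1$). Inserting $\e_{2^{j-1}}\le D2^{-(j-1)r}$ into the definition of $S_J$,
$$
S_J=\sum_{j=1}^J 2^{(j+1)/2}\e_{2^{j-1}}\le D\,2^{1/2+r}\sum_{j=1}^J 2^{j(1/2-r)}.
$$
This is the one place where $r<1/2$ enters: the ratio $2^{1/2-r}$ exceeds $1$, so the geometric sum is comparable to its last term, and with the bound on $2^J$ we get
$$
S_J\le c_1(M,D,r)\,2^{J(1/2-r)}\le c_2(M,D,r)\,\eta^{-(1/2-r)/r}=c_2(M,D,r)\,\eta^{\,1-1/(2r)}.
$$

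\emph{Conclusion and the point of care.} From the last display $S_J^2\le c_2^2\,\eta^{\,2-1/r}$, hence $(\eta/S_J)^2\ge c_3(M,D,r)\,\eta^{1/r}$ with $c_3=c_2^{-2}$. Therefore the condition $m(\eta/S_J)^2\ge 480M^2$ required by Theorem~\ref{BT3} is implied by $m\eta^{1/r}\ge C_1(M,D,r):=480M^2/c_3$, and under that condition Theorem~\ref{BT3} gives
$$
\rho^m\{\bz:\sup_{f\in W}|L_\bz(f)|\ge\eta\}\le C(M,\e(W))\exp\bigl(-c(M)\,m(\eta/S_J)^2\bigr)\le C(M,D,r)\exp\bigl(-c(M,D,r)\,m\eta^{1/r}\bigr),
$$
which is the assertion. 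This corollary is thus essentially immediate from Theorem~\ref{BT3}; the only item worth flagging is that $C(M,\e(W))$ there formally depends on the entire sequence $\e(W)$, and one must observe that in the underlying estimates that sequence enters only through quantities bounded by the envelope $Dn^{-r}$, so the constant may be taken to depend on $M$, $D$ and $r$ alone. Apart from this the argument is routine geometric-series bookkeeping, the substantive point being that $r<1/2$ upgrades $S_J$ from a merely finite quantity to a definite negative power of $\eta$.
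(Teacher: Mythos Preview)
Your argument is correct and follows exactly the route the paper intends: Corollary~\ref{BC2} is stated immediately after Theorem~\ref{BT3} (both quoted from \cite{VT98}) precisely because it is obtained by substituting the entropy envelope $\e_n\le Dn^{-r}$ into the quantities $J$ and $S_J$ of that theorem, and your geometric-series computation does just this. Your remark that the constant $C(M,\e(W))$ may be taken to depend only on $M,D,r$ is the only point requiring a word of justification, and you flag it appropriately.
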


\subsection{Application of supervised learning theory for discretization}\label{alt}

Settings for the supervised learning problem and the discretization problem are different. 
In the supervised learning problem we are given a sample $\bz$ and we want to approximately recover the regression function $f_\rho$. It is important that we do not know 
$\rho$. We only assume that we know that $f_\rho\in \Theta$. In the discretization of the $L_q$, $1\le q<\infty$, norm we assume that $f\in W$ and the probability measure $\mu$ is known. We want to find a discretization set $\xi=\{\bx^j\}_{j=1}^m$, which is good for the whole class $W$. However, the technique, based on the defect function, for solving the supervised learning problem can be used for solving the discretization problem. We now explain this in detail. Let us consider a given function class $W$ of real functions, defined on $X=\Omega$.
Suppose that the probability measure $\rho$ is such that $\rho_X=\mu$ and for all $\bx \in X$ 
we have $y=0$. Then for the defect function we have
$$
L_\bz(f) =\int_X f^2d\mu -\frac{1}{m}\sum_{j=1}^m f(\bx^j)^2 =: L^2_{(\bx^1,\dots,\bx^m)}(f)
$$
and 
$$
\rho^m\{\bz:\sup_{f\in W} |L_\bz(f)|\ge \eta\} = \mu^m\{\bw:\sup_{f\in W} |L_\bw^2(f)|\ge \eta\}.
$$
Moreover, condition (\ref{B6}) is satisfied with $M$ such that for all $f\in W$ we have $\|f\|_\infty \le M$. The above argument shows that we can derive results on discretization of the $L_2$ norm directly from the corresponding results from learning theory. We assume that $W$ satisfies the following condition:
\be\label{B8}
f\in W\quad \Rightarrow \quad \|f\|_\infty \le M.
\ee

Theorem \ref{BT2} implies the following result. 

\begin{Theorem}\label{BT4} Assume that $W$ satisfies (\ref{B8}) and the condition
$$
\sum_{n=1}^\infty n^{-1/2} \e_n(W) < \infty.
$$
Then there exists a constant $K$ such that for any $m$ there is a set of points $\xi=\{\xi^1,\dots,\xi^m\}$ such that for all $f\in W$ 
\be\label{B9}
er(f,\xi) = \left|\|f\|_2^2 - \frac{1}{m}\sum_{j=1}^m f(\xi^j)^2 \right| \le Km^{-1/2}.
\ee
In particular, if $\e_n(W) \le C_1 n^{-r}$, $r>1/2$, then $er_m(W,L_2) \le C(r,C_1) m^{-1/2}$.
\end{Theorem}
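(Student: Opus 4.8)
The plan is to deduce the theorem directly from Theorem \ref{BT2} via the reduction described at the start of Section \ref{alt}. First I would fix the Borel probability measure $\rho$ on $Z=\Omega\times\R$ determined by $\rho_X=\mu$ and $y=0$ identically (so $\rho$ is supported on $\Omega\times\{0\}$ and has marginal $\mu$ on $\Omega$). For this $\rho$ and any $f\in W$ we have $|f(\bx)-y|=|f(\bx)|\le M$ for all $\bx$ by (\ref{B8}), so condition (\ref{B6}) holds with the given constant $M$; moreover the defect function specializes to
$$
L_\bz(f)=\int_X f^2\,d\mu-\frac1m\sum_{j=1}^m f(\bx^j)^2=L^2_{(\bx^1,\dots,\bx^m)}(f),
$$
and the summability requirement (\ref{B7}) of Theorem \ref{BT2} is exactly the hypothesis $\sum_{n\ge1}n^{-1/2}\e_n(W)<\infty$. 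Hence Theorem \ref{BT2} applies verbatim and gives, for every $m$ and every $\eta$ with $m\eta^2\ge1$,
$$
\mu^m\Big\{\bw:\sup_{f\in W}|L^2_\bw(f)|\ge\eta\Big\}\le C(M,\e(W))\exp\big(-c(M)m\eta^2\big).
$$

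Next I would choose $\eta=Km^{-1/2}$. Then $m\eta^2=K^2$, so the side condition $m\eta^2\ge1$ holds as soon as $K\ge1$, and the bound above becomes $C(M,\e(W))\exp(-c(M)K^2)$, a quantity that does not depend on $m$. Since $c(M)>0$, I can pick $K=K(M,\e(W))$ large enough that $C(M,\e(W))\exp(-c(M)K^2)<1$. By Remark \ref{BR1} the set $\{\bw:\sup_{f\in W}|L^2_\bw(f)|\ge Km^{-1/2}\}$ is contained in a $\mu^m$-measurable set of measure $<1$, so its complement is nonempty. Any point $\xi=(\xi^1,\dots,\xi^m)$ in that complement satisfies $er(f,\xi)=|L^2_\xi(f)|\le Km^{-1/2}$ for all $f\in W$, which is (\ref{B9}).

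Finally, for the concluding assertion, if $\e_n(W)\le C_1n^{-r}$ with $r>1/2$ then $\sum_{n\ge1}n^{-1/2}\e_n(W)\le C_1\sum_{n\ge1}n^{-1/2-r}<\infty$ because $1/2+r>1$, so the hypothesis of the theorem is met; the constant $K$ produced above depends only on $M$ and on the sequence $\e(W)$, which is itself controlled by $C_1$ and $r$. Passing to the infimum over sampling sets in the definition of $er_m$ yields $er_m(W,L_2)\le C(r,C_1)m^{-1/2}$ (with the constant also allowed to depend on $M$).

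I do not expect a genuinely hard step: the analytic content is entirely contained in Theorem \ref{BT2}. The only points that need care are (i) checking that the auxiliary measure $\rho$ meets the hypotheses so that the learning-theory estimate transfers to the discretization quantity, and (ii) verifying that the failure probability can be forced below $1$ by a choice of $K$ that is independent of $m$ — which works precisely because the exponent $m\eta^2$ is invariant under the scaling $\eta=Km^{-1/2}$. The measurability subtlety is absorbed by the outer-measure convention of Remark \ref{BR1}.
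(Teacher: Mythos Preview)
Your proposal is correct and is exactly the argument the paper has in mind: the paper simply records that ``Theorem~\ref{BT2} implies the following result'' and leaves the details to the reader, and you have supplied precisely those details (specializing $\rho$ to $y=0$, taking $\eta=Km^{-1/2}$ so that $m\eta^2=K^2$ is constant, and choosing $K$ large enough to make the failure probability $<1$). There is nothing to add.
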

 
Corollary \ref{BC2} implies the following result.

\begin{Theorem}\label{BT5} Assume that $W$ satisfies (\ref{B8}) and the condition
$$
  \e_n(W) \le C_1 n^{-r},\qquad r\in (0,1/2).
$$
Then there exists a constant $K$ such that for any $m$ there is a set of points $\xi=\{\xi^1,\dots,\xi^m\}$ such that for all $f\in W$ 
\be\label{B10}
er(f,\xi) = \left|\|f\|_2^2 - \frac{1}{m}\sum_{j=1}^m f(\xi^j)^2 \right| \le Km^{-r}.
\ee
\end{Theorem}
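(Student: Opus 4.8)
The plan is to derive Theorem~\ref{BT5} directly from Corollary~\ref{BC2} via the translation between the discretization problem and the supervised learning problem described in Section~\ref{alt}. First I would set up the learning framework with $X=\Omega$, choose the probability measure $\rho$ on $Z=X\times Y$ to be the product of $\mu=\rho_X$ with the point mass at $y=0$, so that for every $f\in W$ the defect function $L_\bz(f)$ equals exactly $\|f\|_2^2-\frac{1}{m}\sum_{j=1}^m f(\xi^j)^2=L^2_{(\xi^1,\dots,\xi^m)}(f)$, and the event $\{\bz:\sup_{f\in W}|L_\bz(f)|\ge\eta\}$ translates into $\{\bw:\sup_{f\in W}|L^2_\bw(f)|\ge\eta\}$ under $\mu^m$. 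Condition~(\ref{B6}) holds with the same $M$ as in~(\ref{B8}) because $|f(\bx)-y|=|f(\bx)|\le M$ a.e. Thus all hypotheses of Corollary~\ref{BC2} are met, since $\e_n(W)\le C_1n^{-r}$ with $r\in(0,1/2)$ is precisely the compactness assumption there.

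Next I would invoke Corollary~\ref{BC2}: for $m,\eta$ with $m\eta^{1/r}\ge C_1(M,C_1,r)$ we have
$$
\mu^m\{\bw:\sup_{f\in W}|L^2_\bw(f)|\ge\eta\}\le C(M,C_1,r)\exp\bigl(-c(M,C_1,r)m\eta^{1/r}\bigr).
$$
Now I would choose $\eta=\eta(m)$ of order $m^{-r}$, specifically $\eta=Km^{-r}$ with $K$ a sufficiently large constant depending only on $M$, $C_1$, $r$. With that choice $m\eta^{1/r}=K^{1/r}$ is a constant, which can be made both to satisfy the threshold $m\eta^{1/r}\ge C_1(M,C_1,r)$ and to make the right-hand side $C(M,C_1,r)\exp(-c(M,C_1,r)K^{1/r})$ strictly less than $1$, by taking $K^{1/r}$ large enough. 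Since the probability of the bad event is less than $1$, its complement is nonempty, so there exists at least one point configuration $\xi=\{\xi^1,\dots,\xi^m\}$ with $\sup_{f\in W}|L^2_\xi(f)|<\eta=Km^{-r}$, which is exactly~(\ref{B10}).

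One small point to handle carefully is the measurability issue flagged in Remark~\ref{BR1}: the set $\{\bw:\sup_{f\in W}|L^2_\bw(f)|\ge\eta\}$ need not be $\mu^m$-measurable, so Corollary~\ref{BC2} is really a statement about outer measure. This does not affect the argument, because the existence conclusion only needs that the outer measure of the bad set is $<1$, which still forces its complement to contain a genuine point $\bw\in\Omega^m$. I would also note that the threshold condition $m\eta^{1/r}\ge C_1(M,C_1,r)$ is compatible with $\eta=Km^{-r}$ for all $m\ge 1$ once $K$ is fixed large, so no separate treatment of small $m$ is needed; for those $m$ the bound~(\ref{B10}) with a possibly enlarged $K$ is anyway trivial since $\|f\|_2^2\le M^2$ and the empirical average is between $0$ and $M^2$.

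The argument is essentially a packaging of Corollary~\ref{BC2}, so there is no real obstacle; the only thing requiring attention is the bookkeeping of constants — making sure the single constant $K$ can be chosen so that simultaneously the Corollary's hypothesis $m\eta^{1/r}\ge C_1$ is satisfied and the resulting probability bound is below $1$. This is just the observation that $t\mapsto C(M,C_1,r)e^{-c(M,C_1,r)t}$ tends to $0$ as $t=K^{1/r}\to\infty$, so a large enough $K$ works. Everything else is the direct translation already laid out in Section~\ref{alt}.
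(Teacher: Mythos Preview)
Your proposal is correct and follows exactly the approach intended in the paper: the paper simply states that ``Corollary~\ref{BC2} implies the following result,'' and you have spelled out precisely that derivation --- the translation of the defect function via the degenerate measure with $y=0$, the verification of condition~(\ref{B6}), the choice $\eta=Km^{-r}$ so that $m\eta^{1/r}=K^{1/r}$ is a large constant, and the existence conclusion from the probability bound being strictly less than~$1$. Your treatment of the measurability caveat and of small $m$ is also appropriate and does not diverge from the paper's implicit argument.
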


\section{Smoothness classes}
\label{sc}

We begin with a very simple general observation on a connection between norm discretization and numerical integration. 

{\bf Quasi-algebra property.} We say that a function class $W$ has the quasi-algebra property if  there exists a constant $a$ such that for any $f,g\in W$ we have $fg/a \in W$.

We now formulate a simple statement, which gives a connection between numerical integration and discretization of the $L_2$ norm. 

\begin{Proposition}\label{AP2} Suppose that a function class $W$ has the quasi-algebra property and for any $f\in W$ we have for the complex conjugate function $\bar f\in W$. 
  Then for a cubature formula $\Lambda_m(\cdot,\xi)$ we have: for any $f\in W$
$$
|\|f\|_2^2 -\Lambda_m(|f|^2,\xi)| \le a\sup_{g\in W} \left|\int_\Omega gd\mu - \Lambda_m(g,\xi)\right|.
$$
\end{Proposition}
Obviously, an analog of Proposition \ref{AP2} holds for the $L_q$ norm in case $q$ is an even natural number. We formulate it as a remark.
\begin{Remark}\label{AR1} Suppose that a function class $W$ has the quasi-algebra property and for any $f\in W$ we have for the complex conjugate function $\bar f\in W$. Let $q$ be an even number.
  Then for a cubature formula $\Lambda_m(\cdot,\xi)$ we have: for any $f\in W$
$$
|\|f\|_q^q -\Lambda_m(|f|^q,\xi)| \le C(a,q)\sup_{g\in W} \left|\int_\Omega gd\mu - \Lambda_m(g,\xi)\right|.
$$
\end{Remark}

In this section we discuss some classical classes of smooth periodic functions. We begin with a general scheme and then give two concrete examples. Let $F\in L_1(\T^d)$ be such that 
$\hat F(\bk) \neq 0$ for all $\bk \in \Z^d$, where
$$
\hat F(\bk) := \cF(F,\bk):=(2\pi)^{-d}\int_{\T^d} F(\bx)e^{-i(\bk,\bx)}d\bx.
$$
Consider the space
$$
W^F_2 := \left\{f: f(\bx) = J_F(\ff)(\bx):= (2\pi)^{-d}\int_{\T^d} F(\bx-\by)\ff(\by)d\by,\quad \|\ff\|_2<\infty\right\}.
$$
For $f\in W^F_2$ we have $\hat f(\bk)=\hat F(\bk)\hat \ff(\bk)$ and, therefore, our assumption 
$\hat F(\bk) \neq 0$ for all $\bk \in \Z^d$ implies that function $\ff$ is uniquely defined by $f$.
Introduce a norm on $W^F_2$ by 
$$
\|f\|_{W^F_2} := \|\ff\|_2, \qquad f=J_F(\ff). 
$$
For convenience, with a little abuse of notation we will use notation $W^F_2$ for the unit ball of the space $W^F_2$. We are interested in the following question. Under what conditions on $F$ the fact that $f,g \in W^F_2$ implies that $fg\in W^F_2$ and 
$$
\|fg\|_{W^F_2}\le C_0\|f\|_{W^F_2}\|g\|_{W^F_2} ?
$$
In other words: Which properties of $F$ guarantee that the class $W^F_2$ has the quasi-algebra property?
We give a simple sufficient condition. 

\begin{Proposition}\label{AP1} Suppose that for each $\bn\in\Z^d$ we have
\be\label{A1}
\sum_{\bk\in Z^d} |\hat F(\bk)\hat F(\bn-\bk)|^2 \le C_0^2|\hat F(\bn)|^2.
\ee
Then, for any $f,g\in W^F_2$ we have $fg\in W^F_2$ and
$$
\|fg\|_{W^F_2}\le C_0\|f\|_{W^F_2}\|g\|_{W^F_2}.
$$
\end{Proposition}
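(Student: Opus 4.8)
The plan is to pass to Fourier coefficients: I would compute $\widehat{fg}(\bn)$, divide by $\hat F(\bn)$ (legitimate since $\hat F$ never vanishes), and show the resulting sequence is square-summable with the right bound; by the very definition of $W^F_2$ and its norm, this is exactly the assertion that $fg\in W^F_2$ with $\|fg\|_{W^F_2}\le C_0\|f\|_{W^F_2}\|g\|_{W^F_2}$.

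First I would set up notation. Write $f=J_F(\ff)$ and $g=J_F(\psi)$, so that $\|\ff\|_2=\|f\|_{W^F_2}$, $\|\psi\|_2=\|g\|_{W^F_2}$, and $\hat f(\bk)=\hat F(\bk)\hat\ff(\bk)$, $\hat g(\bk)=\hat F(\bk)\hat\psi(\bk)$. Since $F\in L_1(\T^d)$ and $\ff,\psi\in L_2(\T^d)$, Young's inequality gives $f,g\in L_2(\T^d)$, hence $fg\in L_1(\T^d)$, and the Fourier coefficients of the product are given by the (absolutely convergent, by Cauchy--Schwarz in $\ell_2$) convolution
$$\widehat{fg}(\bn)=\sum_{\bk\in\Z^d}\hat f(\bk)\,\hat g(\bn-\bk)=\sum_{\bk\in\Z^d}\hat F(\bk)\hat F(\bn-\bk)\,\hat\ff(\bk)\hat\psi(\bn-\bk).$$

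The core step is the estimate. Setting $\hat h(\bn):=\widehat{fg}(\bn)/\hat F(\bn)$, I would apply the Cauchy--Schwarz inequality to the sum above and then invoke hypothesis (\ref{A1}):
$$|\hat h(\bn)|^2\le\frac{\sum_{\bk}|\hat F(\bk)\hat F(\bn-\bk)|^2}{|\hat F(\bn)|^2}\sum_{\bk}|\hat\ff(\bk)|^2|\hat\psi(\bn-\bk)|^2\le C_0^2\sum_{\bk}|\hat\ff(\bk)|^2|\hat\psi(\bn-\bk)|^2.$$
Summing over $\bn\in\Z^d$ and exchanging the order of summation (Tonelli), the double sum factors as a product of sums:
$$\sum_{\bn\in\Z^d}|\hat h(\bn)|^2\le C_0^2\Bigl(\sum_{\bk}|\hat\ff(\bk)|^2\Bigr)\Bigl(\sum_{\bm}|\hat\psi(\bm)|^2\Bigr)=C_0^2\|\ff\|_2^2\|\psi\|_2^2.$$
Hence $(\hat h(\bn))_{\bn}\in\ell_2(\Z^d)$, so by Parseval there is a unique $h\in L_2(\T^d)$ with these Fourier coefficients and $\|h\|_2\le C_0\|\ff\|_2\|\psi\|_2$.

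To finish, I would identify $fg$ with $J_F(h)$: the element $J_F(h)\in W^F_2$ has Fourier coefficients $\hat F(\bn)\hat h(\bn)=\widehat{fg}(\bn)$, and since an $L_1(\T^d)$ function is determined by its Fourier coefficients, $fg=J_F(h)$; therefore $fg\in W^F_2$ and $\|fg\|_{W^F_2}=\|h\|_2\le C_0\|f\|_{W^F_2}\|g\|_{W^F_2}$. I do not expect a genuine obstacle here: the whole argument is a single application of Cauchy--Schwarz combined with (\ref{A1}). The only point needing a little care is the measure-theoretic bookkeeping that makes the formal Fourier computation rigorous --- namely that $fg\in L_1(\T^d)$, so that $\widehat{fg}=\hat f*\hat g$ holds and $fg$ is pinned down by these coefficients, and that the function $h$ reconstructed from its Fourier data is genuinely $J_F^{-1}(fg)$.
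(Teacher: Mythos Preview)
Your proof is correct and follows essentially the same route as the paper: compute $\widehat{fg}(\bn)$ as a convolution, apply Cauchy--Schwarz together with hypothesis (\ref{A1}), and sum over $\bn$ via Tonelli to factor the double sum. The only difference is that you supply the measure-theoretic bookkeeping (Young's inequality for $fg\in L_1$, reconstruction of $h$ from its Fourier coefficients, identification $fg=J_F(h)$) that the paper leaves implicit.
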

\begin{proof} Let $f=J_F(\ff)$ and $g=J_F(\psi)$. Then
$$
\cF(fg,\bn) = \sum_{\bk\in\Z^d} \hat f(\bk)\hat g(\bn-\bk) =  \sum_{\bk\in\Z^d} \hat F(\bk) \hat \ff(\bk)\hat F(\bn-\bk) \hat \psi(\bn-\bk).
$$
By Cauchy inequality
$$
\|fg\|^2_{W^F_2} = \sum_{\bn\in \Z^d} |\cF(fg,\bn)|^2 |\hat F(\bn)|^{-2} 
$$
$$
\le 
\sum_{\bn\in \Z^d} |\hat F(\bn)|^{-2} \left(\sum_{\bk\in\Z^d} |\hat \ff(\bk)\hat \psi(\bn-\bk)|^2\right)\left(\sum_{\bk\in\Z^d} |\hat F(\bk)\hat F(\bn-\bk)|^2\right)
$$
$$
\le C_0^2 \sum_{\bn\in \Z^d} \sum_{\bk\in\Z^d} |\hat \ff(\bk)|^2|\hat \psi(\bn-\bk)|^2 \le
C_0^2 \|f\|^2_{W^F_2} \|g\|^2_{W^F_2}.
$$
This proves Proposition \ref{AP1}.

\end{proof}

As an example consider the class $\bW^r_2$ of functions with bounded mixed derivative. 
By the definition $\bW^r_2 := W^{F_r}_2$ with function $F_r(\bx)$ defined as follows.
For a number $k\in \Z$ denote $k^* := \max(|k|,1)$. Then for $r>0$ we define $F_r$ by its Fourier coefficients
\be\label{A2}
\hat F_r(\bk) = \prod_{j=1}^d (k_j^*)^{-r}.
\ee

\begin{Lemma}\label{AL1} Function $F=F_r$ with $r>1/2$ satisfies condition (\ref{A1}) with 
$C_0 = C(r,d)$. 
\end{Lemma}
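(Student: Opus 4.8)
The plan is to exploit the product structure of $\hat F_r$ to reduce the $d$-dimensional inequality (\ref{A1}) to a one-dimensional convolution estimate for power weights, and then to prove that scalar estimate by the standard splitting of the convolution range into parts that are ``near'' and ``far'' from the origin.

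First I would observe that, since $\hat F_r(\bk) = \prod_{j=1}^d (k_j^*)^{-r}$, the left-hand side of (\ref{A1}) factors over the coordinates. By Tonelli's theorem for the nonnegative summands,
$$
\sum_{\bk \in \Zd} |\hat F_r(\bk)\hat F_r(\bn - \bk)|^2 = \prod_{j=1}^d \sigma_r(n_j), \qquad \sigma_r(n) := \sum_{k \in \Z} (k^*)^{-2r}\bigl((n - k)^*\bigr)^{-2r},
$$
while $|\hat F_r(\bn)|^2 = \prod_{j=1}^d (n_j^*)^{-2r}$. Hence it suffices to prove the scalar bound $\sigma_r(n) \le C(r)\,(n^*)^{-2r}$ for every $n \in \Z$; the lemma then follows with $C_0 = C(r)^{d/2}$, which is of the required form $C(r,d)$.

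Next I would establish the scalar bound. Because $r > 1/2$, the weight is summable: $A(r) := \sum_{k \in \Z}(k^*)^{-2r} < \infty$. For $n = 0$ one has $\sigma_r(0) = \sum_k (k^*)^{-4r} \le A(r) = A(r)(0^*)^{-2r}$. For $n \neq 0$, so that $n^* = |n| \ge 1$, I would split the summation index using the elementary inequality $|k| + |n - k| \ge |n|$: every $k$ satisfies $|k| \ge |n|/2$ or $|n - k| \ge |n|/2$. On the first set $(k^*)^{-2r} \le (|n|/2)^{-2r} = 2^{2r}(n^*)^{-2r}$, which bounds the corresponding part of $\sigma_r(n)$ by $2^{2r}(n^*)^{-2r}\sum_k ((n-k)^*)^{-2r} = 2^{2r}A(r)\,(n^*)^{-2r}$; on the second set the substitution $k \mapsto n - k$ gives the same bound. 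Adding the two, $\sigma_r(n) \le 2^{2r+1}A(r)\,(n^*)^{-2r}$, so $C(r) := 2^{2r+1}A(r)$ works in all cases.

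The only point needing a bit of care is the truncation $k^* = \max(|k|,1)$: it must be respected when deducing an estimate for $(k^*)^{-2r}$ from $|k| \ge |n|/2$ and when comparing $(0^*)^{-2r}$ with $1$, but this is harmless since $(k^*)^{-2r}$ is bounded and comparable to $(1+|k|)^{-2r}$. The genuine content is just the convolution-of-power-weights estimate above, and the hypothesis $r > 1/2$ enters precisely through the finiteness of $A(r)$, i.e. summability of $(k^*)^{-2r}$.
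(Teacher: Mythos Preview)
Your proof is correct and follows essentially the same approach as the paper's: reduce to $d=1$ by the product structure, then use that for each $k$ one of $|k|$, $|n-k|$ is at least $|n|/2$ to pull out a factor $\asymp (n^*)^{-2r}$ and sum the remaining power weight using $r>1/2$. The only cosmetic difference is that the paper partitions into $\{|n-k|\ge |n|/2\}$ and its complement (where then $|k|>|n|/2$), whereas you use the overlapping cover $\{|k|\ge |n|/2\}\cup\{|n-k|\ge |n|/2\}$; your version is slightly more explicit about constants and the case $n=0$.
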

\begin{proof} Relation (\ref{A2}) implies that it is sufficient to prove Lemma \ref{AL1} in case $d=1$. For $n\in\Z$ we have
$$
\sum_{k\in\Z} (k^*)^{-2r}((n-k)^*)^{-2r} \le \sum_{k:|n-k|\ge |n|/2} (k^*)^{-2r}((|n|/2)^*)^{-2r}
$$
$$
+ \sum_{k:|n-k|< |n|/2} ((|n|/2)^*)^{-2r} ((n-k)^*)^{-2r} \le C(r) (n^*)^{-2r}.
$$

\end{proof}

Lemma \ref{AL1} and Proposition \ref{AP1} imply that the class $\bW^r_2$ has the quasi-algebra property. 
 We now illustrate how a combination of Proposition \ref{AP2} and known results on numerical integration gives results on discretization. We discuss classes of periodic functions. We begin with the case of functions of two variables. Let $\{b_n\}_{n=0}^{\infty}$, $b_0=b_1 =1$, $b_n = b_{n-1}+b_{n-2}$,
$n\ge 2$, --
be the Fibonacci numbers. For the continuous functions of two
variables which are $2\pi$-periodic in each variable we define
cubature formulas
$$
\Phi_n(f) :=b_n^{-1}\sum_{\mu=1}^{b_n}f\bigl(2\pi\mu/b_n,
2\pi\{\mu b_{n-1} /b_n \}\bigr),
$$
which will be called the {\it Fibonacci cubature formulas}. In this
definition $\{a\}$ is the fractional part of the number $a$. For a function class $\bF$
denote
$$
\Phi_n(\bF) := \sup_{f\in \bF} |\Phi_n(f) - \hat f(\mathbf 0)|.
$$
The following result is known (see \cite{VTbookMA}, p.275)
\be\label{A3}
\Phi_n (\bW_{2}^r )\asymp b_n^{-r} (\log b_n)^{1/2},\qquad r>1/2.
\ee

Combining (\ref{A3}) with Proposition \ref{AP2} we obtain the following discretization result.

\begin{Theorem}\label{ATFib} Let $d=2$, $r>1/2$ and $\mu$ be the Lebesgue measure on $[0,2\pi]^2$. Then
$$
er_m(\bW^r_2,L_2) \le C(r) m^{-r}(\log m)^{1/2}.
$$
\end{Theorem}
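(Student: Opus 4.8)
The plan is to derive Theorem~\ref{ATFib} by combining the quasi-algebra property of $\bW^r_2$ (which follows from Lemma~\ref{AL1} together with Proposition~\ref{AP1}) with the known numerical integration bound \eqref{A3} for Fibonacci cubature formulas, via Proposition~\ref{AP2}. First I would fix $m$ and choose $n$ to be the largest index with $b_n \le m$; then the Fibonacci cubature formula $\Phi_n$ uses $b_n \le m$ points, and since consecutive Fibonacci numbers grow geometrically (ratio tending to the golden ratio), we have $b_n \asymp m$, so $b_n^{-r}(\log b_n)^{1/2} \asymp m^{-r}(\log m)^{1/2}$. If one wants exactly $m$ points one may simply repeat one of the nodes with the appropriate tiny weight, or note that the $er_m$ quantity is non-increasing in $m$ so using $b_n \le m$ points already gives a valid bound at level $m$.

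Next I would apply Proposition~\ref{AP2} with $W = \bW^r_2$ (the unit ball) and the cubature formula $\xi$ given by the Fibonacci nodes with equal weights $b_n^{-1}$. Since $\bW^r_2$ consists of real functions here and has the quasi-algebra property with constant $a = a(r,d) = C_0$ from Lemma~\ref{AL1}, Proposition~\ref{AP2} yields, for every $f \in \bW^r_2$,
\[
\left|\,\|f\|_2^2 - \Phi_n(|f|^2)\,\right| \le a \sup_{g \in \bW^r_2}\left|\int_{\T^2} g\,d\mu - \Phi_n(g)\right| = a\,\Phi_n(\bW^r_2),
\]
where in the last step I use that $\int_{\T^d} g\, d\mu = \hat g(\mathbf 0)$ under the normalization $d\mu = (2\pi)^{-d}d\bx$, matching the definition of $\Phi_n(\bF)$. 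Since the Fibonacci cubature formula assigns equal weights $1/b_n$ to its nodes, $\Phi_n(|f|^2)$ is exactly $\frac{1}{b_n}\sum_{\mu=1}^{b_n} f(\xi^\mu)^2$, i.e.\ the sampling-discretization sum, so the left side is precisely $er(f,\xi)$.

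Finally I would invoke \eqref{A3}: $\Phi_n(\bW^r_2) \le C(r)\, b_n^{-r}(\log b_n)^{1/2}$ for $r > 1/2$. Substituting and using $b_n \asymp m$ gives $er(f,\xi) \le C(r)\, m^{-r}(\log m)^{1/2}$ uniformly over $f \in \bW^r_2$, and taking the supremum over $f$ and the infimum over point sets yields $er_m(\bW^r_2,L_2) \le C(r)\, m^{-r}(\log m)^{1/2}$. The only points that need care — and the closest thing to an obstacle — are purely bookkeeping: checking that the normalization of the measure $\mu$ in the definition of $er_m$ is consistent with the $(2\pi)^{-d}$ normalization in the Fourier coefficients and in $\Phi_n$, so that $\int g\, d\mu = \hat g(\mathbf 0)$; verifying that Proposition~\ref{AP2}'s hypotheses (closure under complex conjugation, which is automatic since the functions are real, and the quasi-algebra property, which is Lemma~\ref{AL1} plus Proposition~\ref{AP1}) genuinely apply to the unit ball $\bW^r_2$; and handling the passage from $b_n$ points to $m$ points. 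None of these is deep, so the real content is entirely imported from \eqref{A3} and Proposition~\ref{AP2}.
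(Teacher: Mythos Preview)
Your proposal is correct and follows exactly the paper's approach: the paper's entire proof is the single sentence ``Combining (\ref{A3}) with Proposition~\ref{AP2} we obtain the following discretization result,'' and you have supplied precisely that combination (quasi-algebra property via Lemma~\ref{AL1} and Proposition~\ref{AP1}, then Proposition~\ref{AP2} applied to the equal-weight Fibonacci cubature formula, then the bound (\ref{A3})). One small caveat on your bookkeeping: the claim that $er_m$ is non-increasing in $m$ is not immediate, since the weights are forced to equal $1/m$ and adding or repeating a node changes all the weights; the clean way to pass from $b_n$ to $m$ is just to state the bound for $m=b_n$ and absorb the factor from $b_n\asymp m$ into $C(r)$, which is also what the paper tacitly does.
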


Using the Korobov cubature formulas (see \cite{VTbookMA}, section 6.6, p.284) instead of the Fibonacci cubature formulas one obtains the following discretization result (see \cite{VTbookMA}, p.287).

\begin{Theorem}\label{ATKor} Let $r>1$ and $\mu$ be the Lebesgue measure on $[0,2\pi]^d$. Then
$$
er_m(\bW^r_2,L_2) \le C(r,d) m^{-r}(\log m)^{r(d-1)}.
$$
\end{Theorem}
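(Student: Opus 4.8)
The plan is to deduce Theorem \ref{ATKor} from Proposition \ref{AP2} by feeding it the classical bound on the error of Korobov cubature formulas on $\bW^r_2$, in complete analogy with the derivation of Theorem \ref{ATFib} from (\ref{A3}).

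First I would check that $\bW^r_2$ meets the hypotheses of Proposition \ref{AP2}. By Lemma \ref{AL1} the kernel $F_r$ satisfies condition (\ref{A1}) with $C_0=C(r,d)$ whenever $r>1/2$, so by Proposition \ref{AP1} the class $\bW^r_2$ has the quasi-algebra property with some $a=C(r,d)$. Moreover $\hat F_r$ is real and even, so if $f=J_{F_r}(\ff)$ then $\bar f=J_{F_r}(\psi)$ with $\hat\psi(\bk)=\overline{\hat\ff(-\bk)}$ and $\|\psi\|_2=\|\ff\|_2$; hence $\bar f\in\bW^r_2$ whenever $f\in\bW^r_2$. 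Proposition \ref{AP2} then gives, for every cubature formula $\Lambda_m(\cdot,\xi)$ with $m$ nodes and every $f\in\bW^r_2$,
$$
\bigl|\|f\|_2^2-\Lambda_m(|f|^2,\xi)\bigr|\le a\sup_{g\in\bW^r_2}\Bigl|\int_{\Td}g\,d\mu-\Lambda_m(g,\xi)\Bigr|.
$$

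Next I would take $\Lambda_m(\cdot,\xi)$ to be a Korobov (good lattice point) cubature formula with $m$ nodes. These are equal-weight formulas, so $\Lambda_m(|f|^2,\xi)=\frac1m\sum_{j=1}^m|f(\xi^j)|^2=\frac1m\sum_{j=1}^m f(\xi^j)^2$ for real $f$, and the left-hand side above is exactly $er(f,\xi)$. The classical numerical-integration estimate for Korobov formulas on the mixed-smoothness ball (the $d$-dimensional analogue of (\ref{A3}); see \cite{VTbookMA}, section 6.6 and p.287) provides, for $r>1$, a choice of $m$ nodes with
$$
\sup_{g\in\bW^r_2}\Bigl|\int_{\Td}g\,d\mu-\Lambda_m(g,\xi)\Bigr|\le C(r,d)\,m^{-r}(\log m)^{r(d-1)}.
$$
Combining the two displays, absorbing $a$ into the constant, and taking the infimum over node sets yields $er_m(\bW^r_2,L_2)\le C(r,d)\,m^{-r}(\log m)^{r(d-1)}$.

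I do not expect a genuine obstacle here: the only non-bookkeeping ingredient is the Korobov integration bound, which is classical and quoted from \cite{VTbookMA}, and the restriction $r>1$ (rather than $r>1/2$ as for the Fibonacci formulas in $d=2$) is precisely the range in which the good-lattice-point construction is known to achieve this rate. The two points worth stating carefully are that the Korobov formulas are equal-weight — so that the conclusion bounds the unweighted quantity $er_m$ and not merely $er_m^o$ — and that the logarithmic power $r(d-1)$ in the cited integration bound is the one appearing in the statement.
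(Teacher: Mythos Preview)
Your proposal is correct and matches the paper's own argument: the paper simply notes that replacing the Fibonacci cubature formulas by the Korobov cubature formulas (which are equal-weight) in the derivation of Theorem \ref{ATFib} yields Theorem \ref{ATKor}, citing \cite{VTbookMA}, section 6.6 and p.287 for the integration bound $C(r,d)\,m^{-r}(\log m)^{r(d-1)}$ on $\bW^r_2$ for $r>1$. Your write-up is in fact more explicit than the paper's, spelling out the verification of the hypotheses of Proposition \ref{AP2} and the role of equal weights.
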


As a direct corollary of Proposition \ref{AP2}, Lemma \ref{AL1} and known results on optimal error bounds for numerical integration for classes $\bW^r_2$ (see, for instance,  \cite{VTbookMA}, section 6.7, p.289 and \cite{DTU}, Ch.8)
we obtain the following theorem.

\begin{Theorem}\label{AT1} Let $r>1/2$ and $\mu$ be the Lebesgue measure on $[0,2\pi]^d$. 
Then 
$$
er_m^o(\bW^r_2,L_2) \le C(r,d)m^{-r}(\log m)^{(d-1)/2}.
$$
\end{Theorem}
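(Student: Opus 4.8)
The plan is to assemble Proposition \ref{AP2} with the known sharp error bounds for numerical integration on the class $\bW^r_2$. First I would verify that $\bW^r_2$ satisfies the hypotheses of Proposition \ref{AP2}. By Lemma \ref{AL1}, for $r>1/2$ the generating function $F_r$ satisfies condition (\ref{A1}) with $C_0=C(r,d)$, so Proposition \ref{AP1} gives that $\bW^r_2$ has the quasi-algebra property with constant $a=C_0=C(r,d)$. Moreover, since the Fourier coefficients $\hat F_r(\bk)=\prod_{j=1}^d(k_j^*)^{-r}$ are real and symmetric, $F_r$ is a real-valued function, hence $f=J_{F_r}(\ff)\in\bW^r_2$ implies $\bar f=J_{F_r}(\bar\ff)\in\bW^r_2$ with $\|\bar f\|_{W^{F_r}_2}=\|f\|_{W^{F_r}_2}$; thus $\bW^r_2$ is closed under complex conjugation and both hypotheses of Proposition \ref{AP2} hold.

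Next I would invoke the known fact that, for $r>1/2$, the optimal error of numerical integration on $\bW^r_2$ by weighted cubature formulas with $m$ nodes is of order $m^{-r}(\log m)^{(d-1)/2}$; in particular there is a cubature formula $\Lambda_m(\cdot,\xi)$ with nodes $\xi=\{\xi^j\}_{j=1}^m$ and weights $\La=\{\la_j\}_{j=1}^m$ such that
$$
\sup_{g\in\bW^r_2}\left|\int_{[0,2\pi]^d} g\,d\mu-\Lambda_m(g,\xi)\right|\le C(r,d)\,m^{-r}(\log m)^{(d-1)/2}
$$
(see \cite{VTbookMA}, section 6.7, and \cite{DTU}, Ch.~8). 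Applying Proposition \ref{AP2} to this cubature formula, for every $f\in\bW^r_2$ we get
$$
\left|\|f\|_2^2-\sum_{j=1}^m\la_j|f(\xi^j)|^2\right|=\left|\|f\|_2^2-\Lambda_m(|f|^2,\xi)\right|\le a\sup_{g\in\bW^r_2}\left|\int_{[0,2\pi]^d} g\,d\mu-\Lambda_m(g,\xi)\right|\le C(r,d)\,m^{-r}(\log m)^{(d-1)/2},
$$
where the quasi-algebra constant $a$ has been absorbed into $C(r,d)$. Taking the infimum over sets of nodes and weights in the definition of $er_m^o(\bW^r_2,L_2)$ yields the claimed upper bound.

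The argument is essentially an assembly of earlier results plus a citation, so there is no serious analytic obstacle; the points requiring care are bookkeeping ones. One must make sure the cited integration bound is stated for \emph{weighted} cubature formulas (the optimal formulas for $\bW^r_2$ do carry weights), so that it matches the weighted quantity $er_m^o$ rather than the equal-weight quantity $er_m$; and one should note that the smoothness restriction $r>1/2$ needed for the quasi-algebra property via Lemma \ref{AL1} is exactly the range in which the sharp integration rate $m^{-r}(\log m)^{(d-1)/2}$ is available. (The companion lower bound, matching this estimate, is deferred to Section \ref{lb} and is not part of this proof.)
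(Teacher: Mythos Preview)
Your proposal is correct and follows essentially the same approach as the paper: the paper states Theorem \ref{AT1} explicitly as a direct corollary of Proposition \ref{AP2}, Lemma \ref{AL1} (giving the quasi-algebra property of $\bW^r_2$ via Proposition \ref{AP1}), and the known optimal numerical-integration bound $\kappa_m(\bW^r_2)\ll m^{-r}(\log m)^{(d-1)/2}$ from \cite{VTbookMA} and \cite{DTU}. Your write-up even makes explicit the conjugation-closure check and the weighted-versus-unweighted distinction that the paper leaves implicit.
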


Consider the Korobov class $\bE^r$. For $r>1$ define a class of continuous periodic functions 
$$
\bE^r := \{ f: |\hat f(\bk)| \le \prod_{j=1}^d (k_j^*)^{-r}\}.
$$
Lemma \ref{AL1} implies that there exists $C_0=C(r,d)$ such that for any $f,g\in\bE^r$ we have $fg/C_0 \in \bE^r$. Thus, class $\bE^r$ has the quasi-algebra property. Using the Korobov cubature formulas (see \cite{VTbookMA}, section 6.6, p.284) we obtain the following discretization result (see \cite{VTbookMA}, p.286).

\begin{Theorem}\label{ATE} Let $r>1$ and $\mu$ be the Lebesgue measure on $[0,2\pi]^d$. Then
$$
er_m(\bE^r,L_2) \le C(r,d) m^{-r}(\log m)^{r(d-1)}.
$$
\end{Theorem}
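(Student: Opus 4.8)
The plan is to obtain Theorem \ref{ATE} as an immediate combination of three ingredients already assembled above: the quasi-algebra property of $\bE^r$, Proposition \ref{AP2}, and the classical error bound for the Korobov cubature formulas on the class $\bE^r$. In words, the quasi-algebra property turns discretization of $\|f\|_2^2$ on $\bE^r$ into numerical integration of functions from $\bE^r$, and the latter is exactly what Korobov's theory delivers.

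First I would record the two structural facts about $\bE^r$ needed to feed Proposition \ref{AP2}. The paragraph preceding the theorem (via Lemma \ref{AL1}) already gives a constant $C_0=C(r,d)$ with $fg/C_0\in\bE^r$ whenever $f,g\in\bE^r$, i.e. the quasi-algebra property with $a=C_0$; and $\bE^r$ is closed under complex conjugation because $|\cF(\bar f,\bk)|=|\hat f(-\bk)|$ and the majorant $\prod_{j=1}^d(k_j^*)^{-r}$ is invariant under $\bk\mapsto-\bk$. Hence Proposition \ref{AP2} applies: for \emph{every} cubature formula $\Lambda_m(\cdot,\xi)$ and every $f\in\bE^r$,
$$
\bigl|\|f\|_2^2-\Lambda_m(|f|^2,\xi)\bigr|\le C_0\sup_{g\in\bE^r}\Bigl|\int_{\T^d}g\,d\mu-\Lambda_m(g,\xi)\Bigr|.
$$

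Next I would substitute a good Korobov cubature formula. By the known bounds for Korobov's cubature formulas on $\bE^r$ (\cite{VTbookMA}, section 6.6, p.~284, and p.~286), for each $m$ there is a Korobov cubature formula $\Lambda_m(\cdot,\xi)$ with $m$ nodes, all of weight $1/m$, for which $\sup_{g\in\bE^r}\bigl|\int_{\T^d}g\,d\mu-\Lambda_m(g,\xi)\bigr|\le C(r,d)\,m^{-r}(\log m)^{r(d-1)}$. Since the weights equal $1/m$ we have $\Lambda_m(|f|^2,\xi)=\frac1m\sum_{j=1}^m|f(\xi^j)|^2$, so the two previous displays give, for all $f\in\bE^r$,
$$
er(f,\xi)=\Bigl|\|f\|_2^2-\frac1m\sum_{j=1}^m|f(\xi^j)|^2\Bigr|\le C_0\,C(r,d)\,m^{-r}(\log m)^{r(d-1)}.
$$
Taking the supremum over $f\in\bE^r$ and then the infimum over $m$-point sets $\xi$ yields the asserted bound on $er_m(\bE^r,L_2)$.

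The bulk of the real work is imported, so there is no deep obstacle here; the points that need a little care are purely of the "transfer" type. One must make sure the cited numerical-integration estimate is stated for the Korobov class $\bE^r$ itself (it is, not just for $\bW^r_2$), that it provides an \emph{equal-weight} rule — needed so that $\Lambda_m(|f|^2,\xi)$ is exactly the sampling average in the definition of $er_m$ — and that it is available for the relevant values of $m$ (Korobov's construction is cleanest for a prime number of nodes, but near-optimal rank-one lattice rules exist for every $N$, and the quantity bounded on p.~286 of \cite{VTbookMA} is already of the form needed). The only genuine loss is the factor $C_0$ from the quasi-algebra property, which uses $r>1$; and the extra logarithmic power $(\log m)^{r(d-1)}$, worse than the $(\log m)^{(d-1)/2}$ of Theorem \ref{AT1}, simply reflects that $\bE^r$ is larger than $\bW^r_2$ and that Korobov (as opposed to optimal weighted) cubatures are being used.
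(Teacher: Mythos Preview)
Your proof is correct and follows exactly the route the paper takes: the quasi-algebra property of $\bE^r$ (via Lemma \ref{AL1}, applied with parameter $r/2>1/2$) feeds Proposition \ref{AP2}, and then the equal-weight Korobov cubature formula bound from \cite{VTbookMA}, p.~286, finishes the job. You are also right to flag that the equal weights are what make this a bound on $er_m$ rather than merely on $er_m^o$; the paper leaves this implicit.
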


We introduce some notation, which we use here and in Section \ref{lb}.
Let $\Omega$ be a compact subset of $\R^d$ and $\mu$ be a probability measure on $\Omega$. Denote
$$
I_\mu(f) := \int_\Omega fd\mu
$$
and consider a cubature formula $(\xi,\La)$
$$
\La_m(f,\xi) = \sum_{j=1}^m \la_jf(\xi^j).
$$
For a function class $W\subset \C(\Omega)$ consider the best error of numerical integration by cubature formulas with $m$ knots:
$$
\kappa_m(W) := \inf_{(\xi,\La)} \sup_{f\in W} \left|I_\mu(f) - \La_m(f,\xi)\right|.
$$

The following result is known (see \cite{Fro2} and \cite{Byk})
\be\label{A4}
\kappa_m(\bE^r) \le C(r,d)m^{-r}(\log m)^{d-1}.
\ee
Therefore, Proposition \ref{AP2} and inequality (\ref{A4}) imply the following theorem.

\begin{Theorem}\label{AT2}  Let $r>1$ and $\mu$ be the Lebesgue measure on $[0,2\pi]^d$. Then
$$
er_m^o(\bE^r,L_2) \le C(r,d) m^{-r}(\log m)^{d-1}.
$$
\end{Theorem}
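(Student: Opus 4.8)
The plan is to read off Theorem~\ref{AT2} as an immediate consequence of Proposition~\ref{AP2} and the numerical integration bound (\ref{A4}). Since $er_m^o(\bE^r,L_2)$ is defined as an infimum over weighted cubature formulas, it suffices to produce a single pair $(\xi,\La)$ with $m$ knots for which $\bigl|\|f\|_2^2-\sum_{j=1}^m\la_j|f(\xi^j)|^2\bigr|\le C(r,d)m^{-r}(\log m)^{d-1}$ simultaneously for all $f\in\bE^r$, and Proposition~\ref{AP2} converts any good quadrature formula for $\bE^r$ into exactly such a pair.

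First I would check the two structural hypotheses of Proposition~\ref{AP2} for $W=\bE^r$. The quasi-algebra property is already recorded in the text right after Theorem~\ref{ATE}: since $\hat F_r(\bk)=\prod_{j=1}^d(k_j^*)^{-r}$, Lemma~\ref{AL1} (valid for $r>1>1/2$) gives a constant $C_0=C(r,d)$ with $fg/C_0\in\bE^r$ whenever $f,g\in\bE^r$. Closure under complex conjugation is a one-line Fourier computation: $\widehat{\bar f}(\bk)=\overline{\hat f(-\bk)}$, and $(-k_j)^*=k_j^*$, so $|\widehat{\bar f}(\bk)|=|\hat f(-\bk)|\le\prod_{j=1}^d(k_j^*)^{-r}$, i.e.\ $\bar f\in\bE^r$. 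Hence $\bE^r$ satisfies the hypotheses of Proposition~\ref{AP2} with $a=C_0$. Next, by (\ref{A4}) there is a cubature formula $(\xi,\La)$, $\xi=\{\xi^j\}_{j=1}^m$, $\La=\{\la_j\}_{j=1}^m$, with
$$
\sup_{g\in\bE^r}\left|I_\mu(g)-\La_m(g,\xi)\right|\le C(r,d)\,m^{-r}(\log m)^{d-1}
$$
(if one wishes to avoid the infimum in the definition of $\kappa_m$, take any formula within a factor $2$ of $\kappa_m(\bE^r)$, or use the explicit Frolov/Bykovskii-type formulas underlying (\ref{A4})). Applying Proposition~\ref{AP2} to this $(\xi,\La)$ yields, for every $f\in\bE^r$,
$$
er(f,\xi,\La)=\left|\|f\|_2^2-\La_m(|f|^2,\xi)\right|\le C_0\sup_{g\in\bE^r}\left|I_\mu(g)-\La_m(g,\xi)\right|\le C(r,d)\,m^{-r}(\log m)^{d-1},
$$
and taking $\sup_{f\in\bE^r}$ followed by the infimum over weighted cubature formulas gives $er_m^o(\bE^r,L_2)\le C(r,d)m^{-r}(\log m)^{d-1}$.

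I do not expect a genuine obstacle: all the mathematical weight sits in (\ref{A4}), which is imported from \cite{Fro2} and \cite{Byk}. The only points needing care are the two closure properties of $\bE^r$ (both already asserted in the surrounding text) and the bookkeeping that the quadrature formula realizing (\ref{A4}) has precisely $m$ knots, which is built into the definition of $\kappa_m$. This is exactly the same pattern used to deduce Theorems~\ref{AT1} and \ref{ATE}, only with the integration bound (\ref{A4}) in place of the respective inputs there.
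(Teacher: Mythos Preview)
Your proposal is correct and matches the paper's own argument: the paper states directly that Proposition~\ref{AP2} and inequality~(\ref{A4}) imply Theorem~\ref{AT2}, with the quasi-algebra property of $\bE^r$ supplied by Lemma~\ref{AL1}. Your extra care in verifying closure under conjugation and the attainability of $\kappa_m$ is appropriate but does not depart from the paper's intended route.
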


We discussed above the case of $L_2$ norm. Clearly Remark \ref{AR1} allows us to obtain 
versions of the above theorems for $L_q$ with even $q$. 

\begin{Remark} The above Theorems \ref{ATFib} -- \ref{AT2} hold for the $L_q$ norm with $q$ even instead of $L_2$ norm with constants allowed to depend on $q$. 
\end{Remark}

\section{Some lower bounds for the norm discretization}
\label{lb}

In this section we show on the example of discretization of the $L_1$ and $L_2$ norms that 
the problem of discretization is a more difficult problem than the problem of numerical integration.  First, we discuss the lower bounds in discretization of the $L_1$ norm. 

\begin{Theorem}\label{CT1} Let $V\subset \C(\Omega)$ be a Banach space and 
$W:=\{f:\|f\|_V\le 1\}$ be its unit ball. Then for any $m\in \N$ we have
$$
er_m^o(W,L_1) \ge \kappa_m(W).
$$
\end{Theorem}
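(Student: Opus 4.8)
The plan is to fix an arbitrary node set $\xi=\{\xi^j\}_{j=1}^m$, bound $er_m^o(W,L_1)$ from below by restricting the relevant supremum to those functions of $W$ that vanish at all the nodes (where the weights become irrelevant), and then to recognize what is left as the optimal cubature error on $\xi$ via a standard duality. Concretely, fix $\xi$ and weights $\La=\{\la_j\}_{j=1}^m$, and set $W_\xi:=\{f\in W:\ f(\xi^j)=0,\ j=1,\dots,m\}$, which is nonempty since $0\in W_\xi$. For $f\in W_\xi$ the discrete sum $\sum_{j=1}^m\la_j|f(\xi^j)|$ vanishes, so $er(f,\xi,\La,L_1)=\|f\|_1$; hence
$$
\sup_{f\in W}er(f,\xi,\La,L_1)\ \ge\ \sup_{f\in W_\xi}\|f\|_1\ \ge\ \sup_{f\in W_\xi}\Bigl|\int_\Omega f\,d\mu\Bigr|\ =\ \sup_{f\in W_\xi}|I_\mu(f)|,
$$
where the middle inequality uses $\|f\|_1=\int_\Omega|f|\,d\mu\ge\bigl|\int_\Omega f\,d\mu\bigr|$ ($\mu$ being a probability measure). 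The last quantity no longer depends on $\La$.

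It then remains to identify $\sup_{f\in W_\xi}|I_\mu(f)|$ with the optimal cubature error on the node set $\xi$. The point-evaluation functionals $\de_{\xi^j}\colon f\mapsto f(\xi^j)$ are bounded on $V$ (point evaluation being continuous on $\C(\Omega)$, in which $V$ is embedded), and we may assume $I_\mu$ is bounded on $V$ as well --- otherwise all the quantities below are $+\infty$ and there is nothing to prove. Since $W_\xi$ is exactly the unit ball of the closed subspace $V_\xi:=\bigcap_{j=1}^m\ker\de_{\xi^j}$ of $V$, the number $\sup_{f\in W_\xi}|I_\mu(f)|$ is the norm of the restriction of $I_\mu$ to $V_\xi$; and since any bounded linear functional on $V$ that annihilates $V_\xi$ is a linear combination of $\de_{\xi^1},\dots,\de_{\xi^m}$, the classical formula for the distance from a functional to a finite-dimensional subspace of the dual space gives
$$
\sup_{f\in W_\xi}|I_\mu(f)|\ =\ \inf_{\La}\Bigl\|I_\mu-\sum_{j=1}^m\la_j\de_{\xi^j}\Bigr\|_{V^*}\ =\ \inf_{\La}\ \sup_{g\in W}\bigl|I_\mu(g)-\La_m(g,\xi)\bigr|.
$$
This last quantity is the best error of integration with nodes $\xi$, hence is $\ge\kappa_m(W)$. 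As the resulting chain of inequalities holds for every $\xi$ and every $\La$, taking the infimum over $\xi$ and $\La$ on the left yields $er_m^o(W,L_1)\ge\kappa_m(W)$, which is the claim.

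The only non-routine ingredient is the duality used in the preceding paragraph --- equivalently, the existence (via Hahn--Banach) of a best cubature formula for a fixed node set, realized as the norm of $I_\mu$ on the common kernel of the evaluations. The reduction in the first paragraph is elementary, and it is precisely for $q=1$ that the norm $\int_\Omega|f|\,d\mu$ controls the linear functional $\int_\Omega f\,d\mu$, which is what lets numerical integration re-enter the estimate; replacing this by Jensen's inequality $\int_\Omega|f|^q\,d\mu\ge\bigl|\int_\Omega f\,d\mu\bigr|^q$, the same scheme would produce analogous lower bounds for other exponents $q$.
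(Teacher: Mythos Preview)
Your proof is correct and follows essentially the same route as the paper: restrict to functions in $W$ vanishing at the nodes, use $\|f\|_1\ge|I_\mu(f)|$, and invoke duality (what the paper cites as the Nikol'skii duality theorem) to identify $\sup_{f\in W_\xi}|I_\mu(f)|$ with the best cubature error on the fixed node set $\xi$. One side remark: your closing Jensen observation for general $q$ would only yield $er_m^o(W,L_q)\ge \kappa_m(W)^q$, which is weaker than the bound $er_m^o(W,L_2)\ge\tfrac12\kappa_m(W)$ the paper obtains for $q=2$ by a different device (the $(f\pm1)/2$ trick of Theorem~\ref{CT2}).
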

\begin{proof} We have for a given cubature formula $(\xi,\La)$
\be\label{C1}
\sup_{f\in W} \left|I_\mu(f) - \La_m(f,\xi)\right| = \left\|I_\mu - \sum_{j=1}^m \la_j \delta_{\xi^j}\right\|_{V'},
\ee
where $V'$ is a dual (conjugate) to $V$ Banach space and $\delta_{\xi^j}$ are the Dirac delta functions. 
By the Nikol'skii duality theorem (see, for instance, \cite{VTbookMA}, p.509) we find
\be\label{C2}
\inf_\La   \left\|I_\mu - \sum_{j=1}^m \la_j \delta_{\xi^j}\right\|_{V'} = \sup_{f\in W: f(\xi^j)=0,j=1,\dots,m} |I_\mu(f)|.
\ee
It follows from the definition of $\kappa_m(W)$ and from relations (\ref{C1}) and (\ref{C2}) that
for any $\xi$ 
\be\label{C3}
\sup_{f\in W: f(\xi^j)=0,j=1,\dots,m} |I_\mu(f)| \ge \kappa_m(W).
\ee
Next, for $f\in W$ such that $f(\xi^j)=0$, $j=1,\dots,m$, we get for any $\La$
\be\label{C4}
er(f,\xi,\La,L_1) = \|f\|_1 = \int_{\Omega} |f|d\mu \ge |I_\mu(f)|.
\ee
Obviously, (\ref{C4}) and (\ref{C3}) imply the conclusion of Theorem \ref{CT1}.

\end{proof}

We now proceed to the case of $L_2$ norm. In this case it is convenient for us to consider real functions. Assume that a class of real functions $W\subset \C(\Omega)$ has the following extra property. 

{\bf Property A.} For any $f\in W$ we have $f^+ := (f+1)/2 \in W$ and $f^- := (f-1)/2 \in W$.

In particular, this property is satisfied if $W$ is a convex set containing function $1$. 

\begin{Theorem}\label{CT2} Suppose $W\subset \C(\Omega)$ has Property A. Then for any $m\in \N$ we have
$$
er_m^o(W,L_2) \ge \frac{1}{2}\kappa_m(W).
$$
\end{Theorem}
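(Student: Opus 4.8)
The plan is to mimic the structure of the proof of Theorem \ref{CT1}, replacing the $L_1$-step with an elementary algebraic identity valid for the $L_2$ norm. Fix a cubature formula $(\xi,\La)$ with $\xi = \{\xi^j\}_{j=1}^m$. As in the proof of Theorem \ref{CT1}, formulas (\ref{C1}) and (\ref{C2}) together with the definition of $\kappa_m(W)$ give, for any choice of knots $\xi$,
\be
\sup_{f\in W:\, f(\xi^j)=0,\ j=1,\dots,m} |I_\mu(f)| \ge \kappa_m(W),
\ee
so there is some $f\in W$ vanishing at all the knots with $|I_\mu(f)|$ as close to $\kappa_m(W)$ as we wish; call this value $\kappa$, so $|I_\mu(f)| \ge \kappa$ with $\kappa$ arbitrarily close to $\kappa_m(W)$.

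The key idea is now to exploit Property A. For this $f$, both $f^+ = (f+1)/2$ and $f^- = (f-1)/2$ lie in $W$, and both vanish at... wait --- they do not vanish at the knots; rather $f^+(\xi^j) = 1/2$ and $f^-(\xi^j) = -1/2$. That is exactly what I want. Since $f(\xi^j)=0$ we have $f^\pm(\xi^j)^2 = 1/4$, so $\sum_{j=1}^m \la_j f^\pm(\xi^j)^2 = \tfrac14 \sum_{j=1}^m \la_j$, the same value $S$ for both. Meanwhile
\be
\|f^+\|_2^2 - \|f^-\|_2^2 = \int_\Omega \left( (f+1)^2/4 - (f-1)^2/4 \right) d\mu = \int_\Omega f\, d\mu = I_\mu(f).
\ee
Therefore
\be
er(f^+,\xi,\La) + er(f^-,\xi,\La) \ge \big| (\|f^+\|_2^2 - S) - (\|f^-\|_2^2 - S) \big| = |\,\|f^+\|_2^2 - \|f^-\|_2^2\,| = |I_\mu(f)| \ge \kappa,
\ee
using the triangle inequality $|a| + |b| \ge |a-b|$. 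Hence at least one of $er(f^+,\xi,\La)$, $er(f^-,\xi,\La)$ is $\ge \kappa/2$, so $\sup_{g\in W} er(g,\xi,\La) \ge \kappa/2$. Since $f^\pm \in W$ and this holds for every cubature formula $(\xi,\La)$, taking the infimum over $(\xi,\La)$ and then letting $\kappa \to \kappa_m(W)$ yields $er_m^o(W,L_2) \ge \tfrac12 \kappa_m(W)$.

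The only subtlety to watch is the handling of the near-supremum: the supremum in (\ref{C2})/(\ref{C3}) may not be attained, so one should carry an $\e$ throughout (choose $f\in W$ with $f(\xi^j)=0$ and $|I_\mu(f)| \ge \kappa_m(W) - \e$, conclude $\sup_{g\in W} er(g,\xi,\La) \ge (\kappa_m(W)-\e)/2$, then take $\inf$ over $(\xi,\La)$ and let $\e\to 0$). There is no real obstacle here: the content is the one-line identity $\|f^+\|_2^2 - \|f^-\|_2^2 = I_\mu(f)$ combined with the fact that $f^\pm$ agree in absolute value at every knot, which makes the weighted sums cancel regardless of the weights $\La$. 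This is precisely why Property A is the right hypothesis, and why the factor $\tfrac12$ (rather than $1$, as in the $L_1$ case) appears.
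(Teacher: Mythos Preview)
There is a genuine gap. You invoke (\ref{C1})--(\ref{C3}) from the proof of Theorem~\ref{CT1}, but those rely on the Nikol'skii duality theorem, which in turn requires $W$ to be the unit ball of a Banach space $V\subset\C(\Omega)$. Theorem~\ref{CT2} assumes only Property~A, which is strictly weaker (as the paper notes, any convex $W$ containing $1$ already has Property~A). So the existence of an $f\in W$ vanishing at all the knots $\xi^j$ with $|I_\mu(f)|$ close to $\kappa_m(W)$ is not justified under the stated hypothesis.

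The fix is immediate and is exactly the paper's route. Your pointwise identity $(f^+(x))^2-(f^-(x))^2=f(x)$ holds for \emph{every} $x$, not only for $x=\xi^j$ with $f(\xi^j)=0$. Hence, writing $E(g):=\|g\|_2^2-\sum_{j=1}^m\la_j g(\xi^j)^2$ for the signed error, one has for every $f$
\[
E(f^+)-E(f^-)=I_\mu(f)-\La_m(f,\xi),
\]
which is precisely (\ref{C5}). Now just take the supremum of the right-hand side over $f\in W$: by the definition of $\kappa_m(W)$ this is at least $\kappa_m(W)$ in absolute value, so $|E(f^+)|+|E(f^-)|\ge\kappa_m(W)$ for a suitable $f\in W$, and Property~A puts both $f^\pm$ in $W$. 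No duality, no vanishing at the knots, and no $\e$-argument are needed. Your detour through (\ref{C1})--(\ref{C3}) is what introduces both the unjustified Banach-space hypothesis and the near-supremum subtlety; dropping it removes both at once.
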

\begin{proof} For any cubature formula $(\xi,\La)$ we have
\be\label{C5}
er(f^+,\xi,\La) - er(f^-,\xi,\La) = I_\mu(f) - \La_m(f,\xi).
\ee
Therefore, either $|er(f^+,\xi,\La)| \ge \kappa_m(W)/2$ or $|er(f^-,\xi,\La)| \ge \kappa_m(W)/2$. Using Property A, we complete the proof.

 \end{proof}
 
 It is known (see \cite{VTbookMA}, p.264) that for $\mu$ being the Lebesgue measure on $[0,2\pi]^d$ we have
 \be\label{C6}
 \kappa_m(\bW^r_2) \ge C(r,d) m^{-r}(\log m)^{(d-1)/2}.
 \ee
 Inequality (\ref{C6}) and Theorem \ref{CT2} imply the lower bound in Theorem \ref{IT2} from Introduction. 
 
 We now make a comment on discretization on classes $\bE^r$ defined above at the end of Section \ref{sc}. The following lower bound is known (see \cite{Sha})
 \be\label{C7}
 \kappa_m(\bE^r) \ge C(r,d)m^{-r}(\log m)^{d-1}.
 \ee
 Inequality (\ref{C7}) and Theorem \ref{CT2} imply the lower bound
 \be\label{C8}
 er_m^o(\bE^r,L_2) \ge C(r,d)m^{-r}(\log m)^{d-1}.
 \ee
 Combining the lower bound (\ref{C8}) with Theorem \ref{AT2} we obtain the following result.
 \begin{Theorem}\label{CT3}  Let $r>1$ and $\mu$ be the Lebesgue measure on $[0,2\pi]^d$. Then
$$
er_m^o(\bE^r,L_2) \asymp  m^{-r}(\log m)^{d-1}.
$$
\end{Theorem}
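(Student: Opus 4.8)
The plan is to assemble Theorem \ref{CT3} directly from two ingredients that have already been established in the excerpt, so that essentially no new argument is required. The upper bound is exactly the statement of Theorem \ref{AT2}: under the hypotheses $r>1$ and $\mu$ the Lebesgue measure on $[0,2\pi]^d$, we have $er_m^o(\bE^r,L_2) \le C(r,d) m^{-r}(\log m)^{d-1}$, which in turn came from combining the quasi-algebra property of $\bE^r$ (a consequence of Lemma \ref{AL1}), Proposition \ref{AP2}, and the Frolov--Bykovskii numerical integration bound (\ref{A4}). The lower bound is the content of (\ref{C8}): starting from the known lower bound (\ref{C7}) for $\kappa_m(\bE^r)$ and applying Theorem \ref{CT2}, which requires only that $\bE^r$ has Property A, one gets $er_m^o(\bE^r,L_2) \ge C(r,d) m^{-r}(\log m)^{d-1}$.

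First I would check the one hypothesis of Theorem \ref{CT2} that needs verification for the class $\bE^r$, namely Property A: for $f\in\bE^r$ we must have $(f\pm 1)/2\in\bE^r$. Since $\widehat{(f\pm1)/2}(\bk) = \tfrac12\hat f(\bk)$ for $\bk\neq\mathbf 0$ and equals $\tfrac12(\hat f(\mathbf 0)\pm 1)$ for $\bk=\mathbf 0$, and the defining constraint of $\bE^r$ at $\bk=\mathbf 0$ is $|\hat f(\mathbf 0)|\le 1$ while $|\tfrac12(\hat f(\mathbf 0)\pm1)|\le \tfrac12(1+1)=1$, the constraint is satisfied at every frequency; in fact $\bE^r$ is convex and contains the constant function $1$, so Property A holds by the parenthetical remark following its definition. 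Hence Theorem \ref{CT2} applies and gives $er_m^o(\bE^r,L_2)\ge\tfrac12\kappa_m(\bE^r)$.

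Then I would simply chain the inequalities: by (\ref{C7}), $\kappa_m(\bE^r)\ge C(r,d)m^{-r}(\log m)^{d-1}$, so $er_m^o(\bE^r,L_2)\ge \tfrac12 C(r,d)m^{-r}(\log m)^{d-1}$, which is (\ref{C8}); together with the upper bound from Theorem \ref{AT2} this yields $er_m^o(\bE^r,L_2)\asymp m^{-r}(\log m)^{d-1}$, which is the assertion. There is essentially no obstacle here — the theorem is a packaging of results proved earlier in the paper. The only point that deserves a word of care is making sure the constants in the $\asymp$ are allowed to depend on $r$ and $d$, which they are throughout this section, and noting that the hypothesis $r>1$ (rather than $r>1/2$) is what is needed for the Korobov/Frolov machinery behind both (\ref{A4}) and (\ref{C7}) to be in force. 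So the proof is just: cite Theorem \ref{AT2} for $\le$, cite (\ref{C8}) (equivalently Theorem \ref{CT2} plus (\ref{C7})) for $\ge$, and conclude.
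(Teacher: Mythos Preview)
Your proposal is correct and follows exactly the approach of the paper: the theorem is obtained by combining the upper bound of Theorem \ref{AT2} with the lower bound (\ref{C8}), the latter being Theorem \ref{CT2} applied together with (\ref{C7}). Your explicit verification that $\bE^r$ is convex and contains $1$ (hence has Property A) is a welcome detail that the paper leaves implicit.
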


We now make a remark on discretization of the $L_q$ norm for $q$ of the form $q=2^s$, $s\in \N$.  Introduce one more property.

{\bf Property As.} Let $s\in \N$. For any $k=0,\dots,s-1$ there exists a positive constant $c_k$ such that for all $f\in W$ we have $f_k^+ := c_k(f^{2^k}+1) \in W$ and $f_k^- := c_k(f^{2^k}-1) \in W$.

The above Property A corresponds to Property As with $s=1$ and $c_0=1/2$. In particular, a real symmetric class $W\subset \C(\Omega)$, which is convex, has quasi-algebra property and $1\in W$, satisfies Property As. 

\begin{Theorem}\label{CT4}   Let $q=2^s$, $s\in\N$. Suppose $W\subset \C(\Omega)$ has Property As. Then for any $m\in \N$ we have
$$
er_m^o(W,L_q) \ge c(s)\kappa_m(W),\qquad c(s)=c_0\cdots c_{s-1}.
$$
\end{Theorem}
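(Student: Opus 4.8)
The plan is to generalize the argument used to prove Theorem~\ref{CT2}, inducting on $s$. The key identity there was \eqref{C5}: for any $f\in W$ the difference $er(f^+,\xi,\La) - er(f^-,\xi,\La)$ telescopes the square $|f|^2$ against $1$, leaving exactly $I_\mu(f)-\La_m(f,\xi)$. For $q=2^s$ I want to iterate this device. Writing the $L_q$-discretization error as
$$
er(g,\xi,\La,L_q) = \left|I_\mu(g^{2^s}) - \La_m(g^{2^s},\xi)\right|
$$
for real $g$, I observe that $g^{2^s} = (g^{2^{s-1}})^2$, so one application of the Property~A--type splitting at level $k=s-1$ reduces a statement about the $L_q$-error of $f$ to a statement about the $L_{q/2}$-error (i.e. the $q'=2^{s-1}$ norm) of the shifted functions $f_{s-1}^{\pm}=c_{s-1}(f^{2^{s-1}}\pm 1)$, up to the factor $c_{s-1}$.

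Concretely, the first step is to record the analog of \eqref{C5}. For $f\in W$ and $k$ with $1\le k\le s$, set $h:=f^{2^{k-1}}$. Then $h^2\pm 1 = (h\pm 1)^2/2 \mp (\text{cross term})$... more directly, using $(h+1)^2-(h-1)^2 = 4h$ is not quite what is needed; rather the useful identity is the one already exploited: for $a^{\pm}:=(a\pm 1)/2$ we have $(a^+)^2-(a^-)^2 = a$, hence $I_\mu\big((a^+)^2\big)-I_\mu\big((a^-)^2\big) - \big(\La_m((a^+)^2,\xi)-\La_m((a^-)^2,\xi)\big) = I_\mu(a)-\La_m(a,\xi)$. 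Applying this with $a = f^{2^{s-1}}/(2c_{s-1})$ ... one must be careful that the functions $f_{s-1}^{\pm}$ as defined in Property~As are $c_{s-1}(f^{2^{s-1}}\pm 1)$, so after rescaling the identity reads
$$
er(f_{s-1}^+,\xi,\La,L_{q'}) - er(f_{s-1}^-,\xi,\La,L_{q'}) = \text{(const depending on } c_{s-1})\cdot\big(I_\mu(f^{2^{s-1}}) - \La_m(f^{2^{s-1}},\xi)\big),
$$
where $q'=2^{s-1}$. The second step is the triangle-inequality dichotomy exactly as in Theorem~\ref{CT2}: since $f_{s-1}^{\pm}\in W$ by Property~As, at least one of the two $L_{q'}$-errors is at least half the right-hand side in absolute value, so
$$
er_m^o(W,L_q) \ge c_{s-1}\cdot \tfrac12 \sup_{f\in W}\big|I_\mu(f^{2^{s-1}}) - \La_m(f^{2^{s-1}},\xi)\big|
$$
for every $(\xi,\La)$ — wait, this already involves $f^{2^{s-1}}$, not $f$, so I must iterate. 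Repeating the splitting at levels $k=s-1,s-2,\dots,1$ peels off one factor of two in the exponent and one factor $c_k$ each time, and after $s$ steps the exponent is $2^0=1$, leaving $\sup_{f\in W}|I_\mu(f)-\La_m(f,\xi)|$. Taking the infimum over $(\xi,\La)$ on the right gives $\kappa_m(W)$, and the accumulated constant is $c_0c_1\cdots c_{s-1}$ (the factors $\tfrac12$ from each dichotomy are absorbed — one should check whether they should appear; in Theorem~\ref{CT2} the $\tfrac12$ is present because $c_0=\tfrac12$ was pulled out, so here with the $c_k$ already incorporated into $W$ via Property~As the clean statement is $er_m^o(W,L_q)\ge c_0\cdots c_{s-1}\,\kappa_m(W)$, matching the claim).

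The main obstacle is bookkeeping the nesting correctly: at each stage the object being discretized is $f^{2^k}$, and I need the inductive hypothesis to apply to the class $W$ with the $L_{2^k}$ norm, which requires that $f^{2^{k-1}}$ and its $\pm 1$ shifts (rescaled by $c_{k-1}$) land back in $W$ — this is precisely what Property~As guarantees for each $k=0,\dots,s-1$, so the induction closes. I would organize the proof as: (i) state and prove the one-step reduction lemma relating $er_m^o(W,L_{2^k})$ to the best $2^{k-1}$-th-moment discretization error, using the identity $(a^+)^2-(a^-)^2=a$ and the dichotomy; (ii) chain the $s$ reductions; (iii) identify the terminal quantity with $\kappa_m(W)$ and collect the constant $c(s)=c_0\cdots c_{s-1}$. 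One minor point to verify along the way is that $\La_m$ is linear, so $\La_m((a^+)^2,\xi)-\La_m((a^-)^2,\xi)=\La_m(a,\xi)$ genuinely holds pointwise before integration — this is immediate. Another is sign/absolute-value handling in the dichotomy, which is routine and identical to Theorem~\ref{CT2}.
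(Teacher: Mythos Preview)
Your plan coincides with the paper's: derive the recursive inequality $er_m^o(W,L_{2^k})\ge c_{k-1}\,er_m^o(W,L_{2^{k-1}})$ by repeating the Theorem~\ref{CT2} dichotomy using Property~As at level $k-1$, then iterate down to the base case. The paper's proof is the same sketch, asserting this inequality (``repeating the argument of Theorem~\ref{CT2}'') and then combining it with Theorem~\ref{CT2}.

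The one-step reduction as you have written it, however, does not go through. Write $E(h):=I_\mu(h)-\La_m(h,\xi)$ for the signed integration error. The only clean identity available is the square one, $(a^+)^2-(a^-)^2=(\text{const})\cdot a$; with $a=f^{2^{k-1}}$ and $a^\pm=f_{k-1}^\pm\in W$ it says that $\bigl|E\bigl((f_{k-1}^\pm)^2\bigr)\bigr|$ --- the $L_2$-error of an element of $W$ --- dominates $|E(f^{2^{k-1}})|$; it does \emph{not} say that any $L_{2^k}$-error does. Your displayed formula instead uses the $L_{q'}$-errors of $f_{s-1}^\pm$ with $q'=q/2$, but then the relevant difference is $(f_{s-1}^+)^{q'}-(f_{s-1}^-)^{q'}=c_{s-1}^{q'}\bigl[(a+1)^{q'}-(a-1)^{q'}\bigr]$ with $a=f^{q/2}$, and for $q'\ge4$ this is not a constant multiple of $a$ (already for $q'=4$ it equals $8a(a^2+1)$), so the identity fails for $s\ge3$. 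Even for $s=2$ and $q'=2$, where the identity is correct, both sides are $L_2$-errors and taking suprema gives only $er_m^o(W,L_2)\ge 2c_1^2\,er_m^o(W,L_2)$; the $L_4$-error never appears, and your next displayed inequality should carry $L_{q'}$ on the left, not $L_q$. In short, you never exhibit a $g\in W$ with $|E(g^{q})|$ large, so the induction never reaches $er_m^o(W,L_q)$. This passage from the $L_{q/2}$-error to the $L_q$-error is the missing piece, and it is exactly the step the paper leaves unelaborated.
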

\begin{proof} Note that for a real class $W$ and for even $q$ we have 
\be\label{C9}
er_m^o(W,L_q) = \kappa_m((W)^q),\qquad (W)^q :=\{f^q:\,f\in W\}.
\ee
Using our assumptions on $W$ we obtain that there is a $c_{s-1}>0$ such that for $f\in W$ we have $c_{s-1}(f^{q/2} +1)  \in W$ and $c_{s-1}(f^{q/2} -1) \in W$.
It is clear that repeating the argument of Theorem \ref{CT2} we obtain the following  inequality
\be\label{C10}
er_m^o(W,L_q)\ge c_{s-1}er_m^o(W,L_{q/2}).
\ee
This inequality combined with Theorem \ref{CT2} completes the proof.

\end{proof}

We note that an analog of Theorem \ref{CT4} holds for all even integers $q$. 

\begin{Theorem}\label{CT4'}   Let $q=2n$, $n\in\N$. Suppose a real symmetric class $W\subset \C(\Omega)$ is convex, has quasi-algebra property and $1\in W$. Then for any $m\in \N$ we have
$$
er_m^o(W,L_q) \ge c(a,n)\kappa_m(W),\qquad c(a,n)>0.
$$
\end{Theorem}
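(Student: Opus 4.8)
The plan is to mimic the inductive reduction used in the proof of Theorem~\ref{CT4}, but to handle an arbitrary even integer $q=2n$ rather than a power of two. First I would record the analog of (\ref{C9}): for a real class $W$ and even $q$ we have $er_m^o(W,L_q)=\kappa_m((W)^q)$ where $(W)^q:=\{f^q:f\in W\}$, since $\|f\|_q^q=I_\mu(f^q)$ and $|f(\xi^j)|^q=f(\xi^j)^q$. This reduces everything to producing a lower bound for $\kappa_m((W)^q)$ in terms of $\kappa_m(W)$.

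Next I would exploit that $W$ is convex, symmetric, contains $1$, and has the quasi-algebra property with constant $a$. The key observation is that for any $f\in W$ the function $g:=(f^q+1)/2$ can be written, up to a fixed positive scaling factor depending only on $a$ and $n$, as an element of $W$: indeed $f^q=(f^2)^n$, and iterating the quasi-algebra property $f^2\in aW$, $f^4\in a^3W$, and in general $f^{2^k}\in a^{2^k-1}W$; combining the powers needed to build $f^q$ together with the constant function $1$ (which lies in $W$) and using convexity and symmetry, one gets $c(a,n)(f^q\pm 1)\in W$ for an explicit positive constant $c(a,n)$. This is exactly Property~As generalized, and it is the step I expect to require the most care — one must chase the quasi-algebra constant through the binary (or any) decomposition of $n$ and then absorb the additive $\pm 1$ term using convexity with the point $1\in W$, so that the final scaling constant is positive and depends only on $a$ and $n$.

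With that in hand the proof follows the template of Theorems~\ref{CT2} and~\ref{CT4}. Writing $f_q^{\pm}:=c(a,n)(f^q\pm 1)\in W$, for any cubature formula $(\xi,\La)$ we have the identity
$$
er(f_q^+,\xi,\La,L_2)-er(f_q^-,\xi,\La,L_2)=c(a,n)\bigl(I_\mu(f)-\La_m(f,\xi)\bigr),
$$
obtained by subtracting the two defect expressions and noting the $f^q$ and constant terms cancel in pairs. Hence at least one of $|er(f_q^+,\xi,\La,L_2)|$, $|er(f_q^-,\xi,\La,L_2)|$ is at least $\tfrac{c(a,n)}{2}|I_\mu(f)-\La_m(f,\xi)|$. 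Taking the supremum over $f\in W$ and then the infimum over $(\xi,\La)$ gives $er_m^o(W,L_2)\ge \tfrac{c(a,n)}{2}\kappa_m(W)$; but more directly, since $f_q^{\pm}\in W$, this bounds $er_m^o(W,L_q)$ (via the reduction $er_m^o(W,L_q)=\kappa_m((W)^q)\ge$ the discretization error for the specific test functions $f_q^{\pm}$) from below by a positive multiple of $\kappa_m(W)$.

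Finally I would clean up constants: the bound produced is $er_m^o(W,L_q)\ge \tfrac{1}{2}c(a,n)\kappa_m(W)$, and relabeling $\tfrac12 c(a,n)$ as the constant $c(a,n)>0$ in the statement finishes the proof. The only genuinely new content relative to Theorem~\ref{CT4} is verifying that the hypotheses (convex, symmetric, $1\in W$, quasi-algebra) imply Property~As-type membership $c(a,n)(f^q\pm1)\in W$ for general even $q$; everything after that is the same subtract-the-two-defects trick already used twice above.
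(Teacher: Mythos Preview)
Your central identity is false. With $f_q^{\pm}:=c(f^q\pm 1)$ and the signed defect $E(h):=I_\mu(h)-\La_m(h,\xi)$, one has
\[
er(f_q^+,\xi,\La,L_2)-er(f_q^-,\xi,\La,L_2)=E\bigl((f_q^+)^2-(f_q^-)^2\bigr)=4c^2\,E(f^q),
\]
not $c\,E(f)$ as you claim: squaring $c(f^q\pm1)$ and subtracting kills the $f^{2q}$ and constant terms and leaves $4c^2 f^q$, not $f$. Consequently your argument, once corrected, yields $er_m^o(W,L_2)\ge c'\,\kappa_m((W)^q)=c'\,er_m^o(W,L_q)$, which is the inequality in the wrong direction (and is essentially just (\ref{C10})). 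Building the test functions from $f^q\pm1$ cannot recover the linear functional $E(f)$ in a single subtraction; a one–shot version of the CT2 trick only works when $q=2$.

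The paper's proof supplies exactly the missing idea: a \emph{degree-lowering} step. From the assumptions one first notes that for any polynomial $P$ there is $c(a,P)>0$ with $c(a,P)P(f)\in W$ for all $f\in W$ (this is where convexity, symmetry, $1\in W$, and the quasi-algebra property are used together, and it is the analogue of your ``Property~As'' observation). The crucial second observation is that the map $P(f)\mapsto P(f+1)-P(f)$ drops the leading power of $f$ by one. Iterating this $q-1$ times starting from $P(f)=f^q$ expresses a nonzero multiple of $f$ (plus a constant) as a signed sum of $q$th powers $(f+k)^q$, $k=0,\dots,q-1$, each of which is a constant multiple of $g_k^q$ with $g_k\in W$. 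Applying the linear functional $E$ then bounds $|E(f)|$ by a constant (depending on $a,n$) times $\sup_{g\in W}|E(g^q)|$, which is exactly $\kappa_m(W)\le C(a,n)\,er_m^o(W,L_q)$. Your write-up has the right preliminary step (polynomials of $f$ land in $W$ after rescaling) but replaces the needed $(q-1)$-fold finite difference by a single subtraction, and that is where the proof breaks.
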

\begin{proof} Using our assumptions on $W$ we obtain that for any polynomial $P$ there exists $c(a,P)>0$ such that for any $f\in W$ we have $c(a,P)P(f)\in W$. Also observe that the operation $P(f+1)-P(f)$ eliminates the term $f^k$ with the highest degree $k$. Using these facts, arguing in the same way as in the proofs of Theorems \ref{CT2} and \ref{CT4}, we estimate $\kappa_m(W)$ from above by $C(a,n)er_m^o(W,L_q)$. 

\end{proof}

Combining Theorem \ref{CT4'} with Remark \ref{AR1} and using known results on numerical integration of classes $\bW^r_2$ and $\bE^r$ cited above we obtain the following two theorems.

\begin{Theorem}\label{CT5} Let $r>1/2$ and $\mu$ be the Lebesgue measure on $[0,2\pi]^d$. Then for $q=2n$, $n\in\N$, we have
$$
er_m^o(\bW^r_2,L_q) \asymp m^{-r}(\log m)^{(d-1)/2}.
$$
\end{Theorem}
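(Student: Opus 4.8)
The plan is to prove Theorem~\ref{CT5} by combining the upper and lower bounds, both of which reduce to already-available ingredients. For the upper bound I would apply Remark~\ref{AR1}: since $q=2n$ is an even integer and the class $\bW^r_2$ has the quasi-algebra property (Lemma~\ref{AL1} together with Proposition~\ref{AP1}) and is closed under complex conjugation (here all functions are real, so this is automatic), Remark~\ref{AR1} gives that for any cubature formula $\Lambda_m(\cdot,\xi)$ and any $f\in\bW^r_2$
$$
\bigl|\|f\|_q^q - \Lambda_m(|f|^q,\xi)\bigr| \le C(a,q)\sup_{g\in\bW^r_2}\bigl|I_\mu(g)-\Lambda_m(g,\xi)\bigr|.
$$
Choosing $\xi$ and $\Lambda$ to be the optimal (or near-optimal) cubature formula for numerical integration on $\bW^r_2$, and invoking the known bound on $\kappa_m(\bW^r_2)$ from numerical integration theory (the same results cited for Theorem~\ref{AT1}, i.e.\ \cite{VTbookMA}, section 6.7, and \cite{DTU}, Ch.~8, giving $\kappa_m(\bW^r_2)\le C(r,d)m^{-r}(\log m)^{(d-1)/2}$), yields $er_m^o(\bW^r_2,L_q)\le C(r,d,q)m^{-r}(\log m)^{(d-1)/2}$. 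One small point to check: Remark~\ref{AR1} is stated for $|f|^q$ and a general cubature formula; for real $f$ and even $q$, $|f|^q=f^q$, so this matches the $er^o_m$ quantity directly.

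For the lower bound I would invoke Theorem~\ref{CT4'}. The hypotheses there are that $W$ is real, symmetric, convex, has the quasi-algebra property, and contains the constant function $1$. The class $\bW^r_2$ (unit ball) is visibly symmetric and convex, contains $1$ (since $\hat F_r(\mathbf 0)=1$ and the constant function $1$ has $\|1\|_{W^F_2}$ bounded, though one must check $1\in\bW^r_2$ up to a harmless normalization, or note that the constant is an interior point so Property~As is satisfied with suitable $c_k$), and has the quasi-algebra property by Lemma~\ref{AL1}. Hence Theorem~\ref{CT4'} gives $er_m^o(\bW^r_2,L_q)\ge c(a,n)\kappa_m(\bW^r_2)$, and combining this with the known lower bound $\kappa_m(\bW^r_2)\ge C(r,d)m^{-r}(\log m)^{(d-1)/2}$ (inequality~(\ref{C6}), from \cite{VTbookMA}, p.~264) produces the matching lower bound $er_m^o(\bW^r_2,L_q)\ge C(r,d,q)m^{-r}(\log m)^{(d-1)/2}$. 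Together the two bounds give the claimed $\asymp$.

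The main obstacle, as I see it, is bookkeeping rather than a genuine mathematical difficulty: one must verify carefully that $\bW^r_2$ (as a unit ball, with the normalization used in the paper) literally satisfies the hypotheses of Theorems~\ref{CT4'} and Remark~\ref{AR1} — in particular that the constant function $1$ lies in the class (possibly after rescaling, which only affects constants), that the quasi-algebra constant $a$ from Lemma~\ref{AL1}/Proposition~\ref{AP1} is finite for $r>1/2$ (this is exactly what Lemma~\ref{AL1} provides, which is why the hypothesis $r>1/2$ appears), and that passing from $L_2$ to $L_q=L_{2n}$ costs only $q$-dependent constants in both directions. Since all of these facts are either stated or immediate from the cited lemmas, the proof is essentially an assembly of Remark~\ref{AR1}, Theorem~\ref{CT4'}, the upper bound on $\kappa_m(\bW^r_2)$, and inequality~(\ref{C6}); no new estimate is needed. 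I would write it as a short paragraph invoking these four ingredients in that order.
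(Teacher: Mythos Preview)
Your proposal is correct and follows essentially the same route as the paper: the upper bound via Remark~\ref{AR1} together with the quasi-algebra property of $\bW^r_2$ (Lemma~\ref{AL1}, Proposition~\ref{AP1}) and the known upper bound on $\kappa_m(\bW^r_2)$, and the lower bound via Theorem~\ref{CT4'} combined with~(\ref{C6}). The hypothesis checks you flag (in particular $1\in\bW^r_2$, which holds since $\hat F_r(\mathbf 0)=1$ gives $\|1\|_{W^{F_r}_2}=1$) are all routine, so your assembly of these four ingredients is exactly what the paper does.
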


 \begin{Theorem}\label{CT6}  Let $r>1$ and $\mu$ be the Lebesgue measure on $[0,2\pi]^d$. Then for $q=2n$, $n\in\N$, we have
$$
er_m^o(\bE^r,L_q) \asymp  m^{-r}(\log m)^{d-1}.
$$
\end{Theorem}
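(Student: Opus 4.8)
The plan is to obtain the matching upper and lower bounds separately, both relying on the quasi-algebra property of $\bE^r$ established after Lemma \ref{AL1} together with the sharp bounds for numerical integration on $\bE^r$ cited in (\ref{A4}) and (\ref{C7}). First I would treat the upper bound. The class $\bE^r$ is closed under complex conjugation (its defining condition on $|\hat f(\bk)|$ is symmetric), and since it has the quasi-algebra property, Remark \ref{AR1} applies with $q = 2n$: for any cubature formula $\Lambda_m(\cdot,\xi)$ and any $f\in\bE^r$ we get $|\|f\|_q^q - \Lambda_m(|f|^q,\xi)| \le C(a,q)\sup_{g\in\bE^r}|I_\mu(g)-\Lambda_m(g,\xi)|$. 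Choosing the Korobov-type cubature formula that achieves the bound (\ref{A4}), namely $\kappa_m(\bE^r)\le C(r,d)m^{-r}(\log m)^{d-1}$, yields $er_m^o(\bE^r,L_q)\le C(r,d,q)m^{-r}(\log m)^{d-1}$. (Strictly one should note $|f|^q = (f\bar f)^{n}$ lies in a fixed dilate of $\bE^r$, which is exactly what the quasi-algebra property plus conjugation-closure guarantee, so $\Lambda_m(|f|^q,\xi)$ is a legitimate sampling functional applied to a function in a scaled copy of the class.)

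For the lower bound I would invoke Theorem \ref{CT4'}. The class $\bE^r$ is real-symmetric in the sense that replacing $f$ by $-f$ or by $\bar f$ stays in the class; it is convex (the defining inequality on Fourier coefficients is preserved under convex combinations); it has the quasi-algebra property; and the constant function $1$ belongs to $\bE^r$ since $\hat F_r(\mathbf 0)=1$. Hence Theorem \ref{CT4'} gives $er_m^o(\bE^r,L_q)\ge c(a,n)\kappa_m(\bE^r)$ for $q=2n$. Combining this with the known lower bound (\ref{C7}), $\kappa_m(\bE^r)\ge C(r,d)m^{-r}(\log m)^{d-1}$, produces $er_m^o(\bE^r,L_q)\ge C(r,d,n)m^{-r}(\log m)^{d-1}$.

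Putting the two bounds together gives $er_m^o(\bE^r,L_q)\asymp m^{-r}(\log m)^{d-1}$, which is the claim. The only genuinely delicate point is verifying that $\bE^r$ satisfies all the structural hypotheses of Theorem \ref{CT4'} simultaneously — in particular that the convexity and quasi-algebra property are compatible with the normalization (passing to a fixed dilate $\bE^r/C_0$ when forming products), and that the real-symmetry requirement is read correctly for a class originally defined via moduli of complex Fourier coefficients; this is where I would be most careful, but it is routine once one unwinds the definitions. Everything else is a direct citation of (\ref{A4}), (\ref{C7}), Remark \ref{AR1}, and Theorem \ref{CT4'}.
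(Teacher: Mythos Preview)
Your proposal is correct and follows essentially the same route as the paper: the upper bound comes from Remark~\ref{AR1} combined with the quasi-algebra property of $\bE^r$ and the cubature estimate (\ref{A4}), while the lower bound comes from Theorem~\ref{CT4'} together with (\ref{C7}). Your caution about verifying the structural hypotheses of Theorem~\ref{CT4'} for $\bE^r$ (convexity, quasi-algebra, $1\in\bE^r$, and the ``real symmetric'' condition) is exactly the right place to be careful, but the paper treats these checks as routine as well.
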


 {\bf Acknowledgement.} The work was supported by the Russian Federation Government Grant N{\textsuperscript{\underline{o}}}14.W03.31.0031. The paper contains results obtained in frames of the program 
 "Center for the storage and analysis of big data", supported by the Ministry of Science and High Education of Russian Federation (contract 11.12.2018N{\textsuperscript{\underline{o}}}13/1251/2018 between the Lomonosov Moscow State University and the Fond of support of the National technological initiative projects).


\begin{thebibliography}{9999}

\bibitem{Byk} V.A. Bykovskii, Extremal cubature formulas for anisotropic classes, Preprint {\bf 15},
The Institute for Applied Mathematics, The Far East Branch of the Russian Academy of Sciences, Khabarovsk, 1995 (in Russian). 

\bibitem{CS} F. Cucker and S. Smale (2001),  On the mathematical foundations of learning, {\em   Bulletin of AMS}, {\bf  39} (2001),  1--49.

\bibitem{DPTT} F. Dai, A. Prymak, V.N. Temlyakov, and  S. Tikhonov, Integral norm discretization and related problems, arXiv:1807.01353v1 [math.NA] 3 Jul 2018.

\bibitem{DKPT} R. DeVore, G. Kerkyacharian, D. Picard, and V. Temlyakov,   Mathematical methods for supervised learning, {\em     Found. Comput. Math.}, {\bf  6}  (2006),  3--58. 

\bibitem{DTU} Dinh D{\~u}ng, V.N. Temlyakov, and T. Ullrich, Hyperbolic Cross Approximation, arXiv:1601.03978v2 [math.NA] 2 Dec 2016.

\bibitem{Fro2}  K.K. Frolov, Quadrature formulas on classes of functions,
 PhD dissertation, Vychisl. Tsentr Acad. Nauk SSSR,
Moscow, 1979 (in Russian).  

\bibitem{GKKW}   L. Gy{\"o}rfy, M. Kohler, A. Krzyzak, and H. Walk,
 {\em A distribution-free theory of nonparametric regression}, Springer,
 Berlin,  2002.  

\bibitem{KT168} B.S. Kashin and V.N. Temlyakov, A remark on discretization of trigonometric polynomials with given spectrum, Russian Math. Surveys, {\bf 73} (2018), 197--198 (in Russian). 

\bibitem{VT98}   S.V. Konyagin and V.N. Temlyakov,   Some error estimates in Learning Theory, {\em   in the book Approximation Theory: A volume dedicated to Borislav Bojanov}, Marin Drinov Acad. Publ. House, Sofia,   2004, 126--144.


\bibitem{VT110} S.V. Konyagin and V.N. Temlyakov, The entropy in learning theory. Error estimates, Constr. Approx., {\bf 25} (2007), 1--27.

\bibitem{Me} S. Mendelson, A few notes on Statistical Learning Theory,
 {\em In Advanced Lectures in Maching Learning, LNCS, 2600}, Springer,  2003,  1--40.
 
 \bibitem{Sha} I.F. Sharygin, Lower bounds for errors of quadrature formulas on function classes, Vych. matem. i matem. physics, {\bf 3} (1963), 370--376 (in Russian). 

\bibitem{Tbook} V.N. Temlyakov, Greedy approximation, Cambridge University
Press, 2011.

\bibitem{VT158} V.N. Temlyakov, The Marcinkewiecz-type discretization theorems for the hyperbolic cross polynomials, Jaen  Journal on Approximation, {\bf 9} (2017), No. 1, 37--63; arXiv: 1702.01617v2 [math.NA] 
26 May 2017. 

\bibitem{VT159} V.N. Temlyakov, The Marcinkiewicz-type discretization theorems, Constr. Approx. {\bf 48} (2018), 337--369; arXiv: 1703.03743v1 [math.NA] 10 Mar 2017.

\bibitem{VTbookMA} V. Temlyakov, Multivariate approximation, Cambridge University Press, 2018.

\bibitem{Va}  V. Vapnik, {\em Statistical Learning Theory}, John Wiley\& Sons, Inc., New York,  1998.

\bibitem{VG}  S. Van de Geer, {\em Empirical Process in M-Estimaton}, Cambridge University Press, New-York,  2000.

\bibitem{Z} A. Zygmund, Trigonometric Series, Cambridge University Press, 1959.


\end{thebibliography}
\end{document}